\newcommand{\TheTitle}{MPC, Cost Controllability, and Homogeneity}
\newcommand{\TheAuthors}{J.-M. Coron, L. Gr\"{u}ne, and K. Worthmann}
\title{{Model Predictive Control, Cost Controllability, and Homogeneity}%
%\title{{\TheTitle}%
\thanks{Jean-Michel Coron acknowledges funding from the Miller Institute and from the Agence Nationale de La Recherche (ANR), grant ANR Finite4SoS. He also thanks the Miller Institute and UC Berkeley for their hospitality. Karl Worthmann gratefully acknowledges funding from the German Research Foundation (Deutsche Forschungsgemeinschaft, grant WO 2056/6-1).}}
\author{
  Jean-Michel Coron\thanks{Sorbonne Universit\'{e}, CNRS, INRIA, Laboratoire Jacques-Louis Lions, \'{e}quipe Cage, Paris, France
    (\textit{coron@ann.jussieu.fr}).}
  \and
  Lars Gr\"{u}ne\thanks{Mathematical Institute, Faculty of Mathematics, Physics and Computer Science, University of Bayreuth, Germany (\textit{lars.gruene@uni-bayreuth.de}).}
  \and
  Karl Worthmann\thanks{Institute for Mathematics, Faculty of Mathematics and Natural Sciences, Technische Universit\"{a}t Ilmenau, Ilmenau, Germany (\textit{karl.worthmann@tu-ilmenau.de}).}%\footnotemark[3]
}
\crefname{equation}{}{}
\def\R{\mathbb{R}}
\def\eps{\varepsilon}
\newtheorem{theorem}{Theorem}
\newtheorem{proposition}{Proposition}
\newtheorem{lemma}{Lemma}
\newtheorem{definition}{Definition}
\newtheorem{assumption}{Assumption}
\newtheorem{example}{Example}
\newtheorem{remark}{Remark}
\begin{document}

\maketitle

% REQUIRED
\begin{abstract}
	We are concerned with the design of Model Predictive Control (MPC) schemes such that asymptotic stability of the resulting closed loop is guaranteed %
	--~even if the linearization at the desired set point fails to be stabilizable. %
	Therefore, we propose to construct the stage cost based on the homogeneous approximation and %
	rigorously show that applying MPC yields an asymptotically stable closed-loop behavior if the homogeneous approximation is asymptotically null controllable. %
	To this end, we verify cost controllability --~a condition relating the current state, the stage cost, and the growth of the value function w.r.t.\ time~-- %
	for this class of systems %
	in order to provide stability and performance guarantees for the proposed MPC scheme without stabilizing terminal costs or constraints.
\end{abstract}

% REQUIRED
%\begin{keywords}
%	Cost controllability, homogeneity, homogeneous approximation, model predictive control, stability guarantee
%\end{keywords}

% REQUIRED
%\begin{AMS}
%	49N35, %
%	%93-XX Systems theory; control
%	%93Bxx controllability, observability, and system structure
%	%93B05, % Controllability
%	93B52, % Feedback control
%	%93Cxx Control systems
%	93C10, % Nonlinear Systems
%	93C15, % Systems governed by ODEs
%	%93Dxx Stability
%	93D15, % Stabilization of systems by feedback
%	93D20, % Asymptotic Stability
%\end{AMS}

%\def\LG{\textcolor{blue}}

\section{Introduction}

\noindent Model Predictive Control (MPC) is nowadays a well-established and widely applied control technique for linear and nonlinear systems, see, e.g.\ the textbooks~\cite{GrunPann17,KouvCann16,RawlMayn17} and the survey papers~\cite{ForbPatw15,Lee11}. One of the main reasons for its success is the simplicity of the underlying idea: measure the current state of the system in consideration, optimize its behaviour on a finite prediction window, and implement the first piece of the computed control function before repeating the procedure ad infinitum. However, the stability analysis (or the construction of stabilizing terminal costs and constraints) is often based on the (explicit or implicit) assumption that the linearization at the desired set point is stabilizable~\cite{ChenAllg98}. Otherwise, the origin may not be asymptotically stable w.r.t.\ the MPC closed loop if, e.g.\ a quadratic stage cost is used --~independently of the length of the optimization horizon as rigorously shown in~\cite{MullWort17} for the mobile robot example, i.e.\ a finite-time controllable system.

We propose to design the stage cost based on the homogeneous approximation~\cite{Kaws88,AndrPral08} and \cite[Section 12.3]{Coro07}. %
Homogeneity --~which is a generalization of linearity to nonlinear systems~-- is a property, %
which was extensively used to construct stabilizing (homogeneous) control laws~\cite{Kawsk90,CoroPral91,Herm95,MoriSams96,Mori99}. %
We show that, if the homogeneous approximation is stabilizable, the desired set point is asymptotically stable w.r.t.\ the MPC closed-loop without terminal costs or constraints. Here, asymptotic stability is understood in a global (homogeneous system with degree zero), semi-global (negative degree of homogeneity), or local (homogeneous approximation) sense. All results are motivated and illustrated by examples.

To this end, we verify \textit{cost controllability} --~a condition relating the growth of the value function w.r.t.\ time with the stage cost. Then, stability guarantees or a desired performance bound for the proposed MPC scheme can be deduced if the prediction horizon is sufficiently large using techniques originally developed in a discrete-time setting, see \cite{Grun09,GrunPann10}, adopted to the cost-controllability condition~\cite{TunaMess06,Wort11}, and transferred to the continuous-time setting~\cite{ReblAllg12,WortRebl14}. While we first show our results for homogeneous systems, we make use of homogeneous Lyapunov functions (see, in particular, \cite{Rosi92,CoroPral91,Grun00,BaccRosi06} and \cite[Section 12.3]{Coro07}) in order to show that our results are locally preserved for (general) nonlinear systems, i.e.\ that cost controllability and local asymptotic stability of the origin for the MPC closed loop hold if the homogeneous approximation at the origin is stabilizable.

The paper is organized as follows: In \cref{sec:ProblemFormulation}, cost controllability and its relationship with MPC are recalled and adapted to our setting. Then, we introduce homogeneous control systems in accordance to~\cite{Grun00} and~\cite{BhatBern05} and give an example that MPC with quadratic costs may fail for finite-time stabilizable systems with zero degree of homogeneity. In \cref{sec:CostControllability}, we establish cost controllability and asymptotic stability of the origin for the MPC closed loop for control systems with zero degree of homogeneity and the local counterparts of these results if the degree of homogeneity is negative. Then, in \cref{sec:HomogeneousApproximation}, we extend our results to systems with globally asymptotically null controllable homogeneous approximation before conclusions are drawn in \cref{sec:conclusions}.

\textbf{Notation}: We call a continuous function $\rho: [0,\infty) \rightarrow [0,\infty)$ of class $\mathcal{K}_\infty$ if it maps zero to zero, is strictly monotonically increasing, and unbounded. For further details on comparison functions, we refer to the compendium~\cite{Kell14} and the references therein. The set $\mathrm{co}\,K$ denotes the convex hull of the set~$K$.

\section{Problem Formulation}\label{sec:ProblemFormulation}

We consider the control system governed by the autonomous ordinary differential equation
\begin{equation}\label{eq:SystemDynamics}
	\dot{x}(t) = f(x(t),u(t))
\end{equation}
with state $x(t) \in \mathbb{R}^n$ and control $u(t) \in \mathbb{R}^m$ at time $t \geq 0$. %
We assume continuity of the map $f: \mathbb{R}^n \times \mathbb{R}^m \rightarrow \mathbb{R}^n$ and local Lipschitz continuity w.r.t.\ its first argument on $\mathbb{R}^n \setminus \{0\}$ %
such that, for each initial value~$x^0$ and each control function $u \in \mathcal{L}^{\infty}_{\operatorname{loc}}([0,\infty),\mathbb{R}^m)$, %
the solution trajectory $x(\cdot;x^0,u)$ uniquely exists (on its maximal interval~$I_{x^0}$ of existence).

\subsection{Model Predictive Control}

Let $f(0,0) = 0$ hold, i.e.~the origin~$0 \in \mathbb{R}^n$ is a (controlled) equilibrium. %
Our goal is the stabilization of system~\cref{eq:SystemDynamics} at the origin. %
As a preliminary condition, we require a stage cost $\ell: \mathbb{R}^n \times \mathbb{R}^m \rightarrow \mathbb{R}_{\geq 0}$ satisfying
\begin{equation}\label{eq:StageCostsCondition}
	\rho(\|x\|) \leq \ell^\star(x) := \inf_{u \in \mathbb{R}^m} \ell(x,u) \leq \varrho(\|x\|) \qquad\forall\,x \in \mathbb{R}^n,
\end{equation}
for $\mathcal{K}_\infty$-functions~$\rho$ and~$\varrho$. %

We briefly introduce model predictive control. The basic idea is simple: %
For the current state $\hat{x} \in \mathbb{R}^n$, optimize a cost functional on a finite prediction horizon~$[0,T]$ %
w.r.t.\ the system dynamics~\cref{eq:SystemDynamics}, the initial condition $x(0) = \hat{x}$, and, if present, state and control constraints. %
Then, the first portion $u^\star(t)$, $t \in [0,\delta)$, of the computed optimal control function $u^\star = u^\star(\cdot;\hat{x}) \in \mathcal{L}^\infty([0,T],\mathbb{R}^m)$ %
is implemented before the prediction window is shifted forward in time. %
The procedure is repeated ad infinitum. %, see \cref{alg:MPC}. %
Assuming existence of a minimizer for simplicity of exposition, the MPC scheme reads as displayed in \cref{alg:MPC}.
\begin{algorithm}
	\caption{Model Predictive Control}
 	\label{alg:MPC}
 	\begin{algorithmic}
 		\STATE{Set $t = 0$ and let the prediction horizon~$T$ and the time shift~$\delta \in (0,T)$ be given.}
 		\begin{enumerate}
 			\STATE{Measure the current state~$\hat{x}:=x(t)$}		
			\STATE{Solve the optimal control problem
			\begin{displaymath}
				\min_{u \in \mathcal{L}^{\infty}([0,T],\mathbb{R}^m)}\ \int_0^T \ell(x(s;\hat{x},u),u(s))\,\mathrm{d}s %
				\quad\text{subject to}\quad x(0;\hat{x},u) = \hat{x} \text{ and~\cref{eq:SystemDynamics}}
			\end{displaymath}
			and denote its solution by $u^\star = u^\star(\cdot;\hat{x})$.}
		\STATE{Implement $u^\star(s)|_{s \in [0,\delta)}$ at the plant, increment~$t$ by $\delta$, and goto Step~1.}
		\end{enumerate}
 	\end{algorithmic}
\end{algorithm}

\noindent We define the value function $V_t:\mathbb{R}^n \rightarrow \mathbb{R}_{\geq 0} \cup \{\infty\}$ by
\begin{equation}\nonumber
	V_t(\hat{x}) :=\inf_{u \in \mathcal{L}^{\infty}([0,t],\mathbb{R}^m)} \int_0^t \ell(x(s;\hat{x},u),u(s))\,\mathrm{d}s \quad\text{subject to}\quad x(0;\hat{x},u) = \hat{x} \text{ and~\cref{eq:SystemDynamics}},
\end{equation}
with the convention that $V_t(\hat{x})=\infty$ if there exists no $u \in \mathcal{L}^{\infty}([0,t],\mathbb{R}^m)$ such that $x(t;\hat{x},u)$, $t \in [0,T]$, is defined. %
This definition allows us to state the following condition relating the stage cost~$\ell$ at the current state~$\hat{x}$ and the value function.
\begin{assumption}[Cost controllability on a set $K$]\label{ass:GrowthCondition_nonglobal}
	Consider a set $K\subseteq \mathbb{R}^n$ containing $0$ in its interior. %
	We assume that there exists a monotonically increasing, bounded function $B_K: [0,\infty) \rightarrow \mathbb{R}_{\geq 0}$, which satisfies the inequality
	\begin{equation}\label{eq:GrowthCondition}
		V_t(x) \leq B_K(t)\ \ell^\star(x) \qquad\forall\,(t,x) \in \mathbb{R}_{\geq 0} \times K.
	\end{equation}
\end{assumption}

For the formulation of the following theorem, which is an extension of a result from \cite{ReblAllg12}, we moreover need the sublevel set
\[
	V_T^{-1}([0,L]) := \{x\in \R^n\,|\, V_T(x) \leq L\}.
\]
\begin{theorem}\label{thm:AsymptoticStabilityMPC_nonglobal}
	Consider the MPC scheme given in \cref{alg:MPC} %
	and let condition~\cref{eq:StageCostsCondition} and \cref{ass:GrowthCondition_nonglobal} for a set $K \subseteq \R^n$ be satisfied %
	for the stage cost~$\ell$ and the system dynamics~\cref{eq:SystemDynamics}. %
	Then, for given $\delta > 0$, there exists a sufficiently large prediction horizon $T > \delta$ %
	such that the origin is locally asymptotically stable for the MPC closed loop. Moreover, the domain of attraction contains all level sets $V_T^{-1}([0,L])\subseteq K$.
	
	If $K = \mathbb{R}^n$, the origin is globally asymptotically stable for the MPC closed loop. %
	Moreover, if the assumptions hold for each compact set $K\subset\R^n$ containing $0$ in its interior %
	(with $B_K$ possibly depending on $K$), %
	then the origin is semiglobally asymptotically stable for the MPC closed loop, i.e., %
	for each compact set $\widehat K\subset \R^n$ and each $\delta > 0$ there is $T>0$, %
	such that the origin is locally asymptotically stable for the MPC closed loop and $\widehat K$ is contained in the domain of attraction.
\end{theorem}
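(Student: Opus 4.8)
The plan is to reduce \cref{thm:AsymptoticStabilityMPC_nonglobal} to the now-standard relaxed-dynamic-programming machinery for MPC without terminal ingredients, in the continuous-time formulation of \cite{ReblAllg12,WortRebl14}, and to do the bookkeeping needed to turn the local statement into the global and semiglobal ones. First I would record the dynamic programming principle for the value functions $V_t$: for $0<\delta<T$ and any admissible $u$ on $[0,T]$ with associated trajectory $x(\cdot;\hat x,u)$,
\[
	V_T(\hat x) \;=\; \inf_{u}\Bigl( \int_0^\delta \ell(x(s;\hat x,u),u(s))\,\mathrm ds + V_{T-\delta}(x(\delta;\hat x,u)) \Bigr),
\]
and, using $V_{T-\delta}\le V_T$ (monotonicity of $V_t$ in $t$, which follows from nonnegativity of $\ell$), this yields the relaxed inequality
\[
	V_T(\hat x) \;\ge\; \int_0^\delta \ell(x^\star(s),u^\star(s))\,\mathrm ds + V_{T}(x^\star(\delta))
\]
only after one inserts a suitable suboptimality factor $\alpha=\alpha(T,\delta)$. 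This is where \cref{ass:GrowthCondition_nonglobal} enters: the bound $V_t(x)\le B_K(t)\,\ell^\star(x)$, together with the boundedness of $B_K$, lets one estimate the ``tail'' $V_{T-\delta}(x^\star(\delta))$ against the running cost on $[0,\delta]$ and extract $\alpha(T,\delta)\to 1$ as $T\to\infty$ (for fixed $\delta$); I would cite the controllability-based performance estimates of \cite{Grun09,GrunPann10,TunaMess06,Wort11} and their continuous-time versions \cite{ReblAllg12,WortRebl14} for the precise dependence, so that for $T$ large enough $\alpha(T,\delta)>0$.

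Next I would turn the relaxed inequality into a Lyapunov argument. On the feasible set one gets, writing $x^+ := x^\star(\delta;\hat x)$ for the MPC successor state,
\[
	V_T(x^+) \;\le\; V_T(\hat x) - \alpha(T,\delta)\int_0^\delta \ell(x^\star(s),u^\star(s))\,\mathrm ds \;\le\; V_T(\hat x) - \alpha(T,\delta)\,\delta\,\rho(\|x^\star(\delta^-)\|)
\]
type estimate; combined with the sandwich $\rho(\|x\|)\le \ell^\star(x)\le V_T(x)\le B_K(T)\ell^\star(x)\le B_K(T)\varrho(\|x\|)$ from \cref{eq:StageCostsCondition} and \cref{eq:GrowthCondition}, $V_T$ is a Lyapunov function for the sampled-data closed loop, giving asymptotic decay of $V_T$ along the MPC trajectory and hence attractivity; the two-sided $\mathcal K_\infty$ bounds give stability in the usual $\varepsilon$–$\delta$ sense and identify the sublevel sets $V_T^{-1}([0,L])\subseteq K$ as forward invariant (since $V_T$ decreases) and contained in the domain of attraction. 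Care is needed with the intersample behavior and with the fact that the estimate is a-priori only valid as long as the trajectory stays in $K$; forward invariance of $V_T^{-1}([0,L])$ closes this gap.

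Finally, the global and semiglobal claims. If $K=\mathbb R^n$ the preceding argument is valid everywhere, every sublevel set is invariant, and $V_T$ is radially unbounded by $\rho$, so the origin is GAS. For the semiglobal statement, fix a compact $\widehat K$ and $\delta>0$; choose a compact $K$ with $\widehat K$ in its interior, apply \cref{ass:GrowthCondition_nonglobal} on $K$, and note that because $B_K$ is bounded and $\varrho\in\mathcal K_\infty$ there is $L$ with $\widehat K\subseteq V_T^{-1}([0,L])\subseteq K$; enlarging $T$ if necessary so that $\alpha(T,\delta)>0$ then makes $\widehat K$ part of the domain of attraction. The main obstacle I anticipate is not conceptual but technical: making the continuous-time relaxed dynamic programming estimate rigorous — in particular justifying the DPP with measurable selections, handling the maximal interval of existence $I_{x^0}$ so that the closed-loop trajectory is in fact defined for all $t\ge 0$ (which invariance of the sublevel set provides), and tracking the precise form of $\alpha(T,\delta)$ and its limit — rather than re-deriving it from scratch, which is why I would lean on \cite{ReblAllg12,WortRebl14} for that core step and concentrate the new work on the invariance/comparison-function bookkeeping that upgrades the conclusion to the semiglobal case.
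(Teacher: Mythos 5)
Your proposal follows essentially the same route as the paper's own (sketched) proof: the relaxed dynamic programming estimate with suboptimality index $\alpha(T,\delta)$ imported from \cite{ReblAllg12,WortRebl14} (building on \cite{Grun09,GrunPann10}), $V_T$ as a Lyapunov function whose decrease on $K$ makes the sublevel sets $V_T^{-1}([0,L])\subseteq K$ forward invariant and contained in the domain of attraction, and the semiglobal claim via estimating the size of these sublevel sets as in \cite[Theorem~6.35]{GrunPann17}. The only slip is the pointwise inequality $\ell^\star(x)\le V_T(x)$ in your sandwich bound, which is a discrete-time fact that does not carry over to continuous time (one only has $V_T(x)\ge\int_0^T\rho(\|x(s;x,u)\|)\,\mathrm{d}s$ along admissible trajectories), so the lower Lyapunov bound and the inclusion $V_T^{-1}([0,L])\subseteq K$ have to be obtained by the arguments in the cited continuous-time references rather than from that inequality --- exactly the level of deferral the paper itself uses.
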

\begin{proof} %\noindent
	The assertion for $K = \mathbb{R}^n$ was already shown in \cite{ReblAllg12}. %
	To prove the non-global version, i.e.\ $K \subsetneq \mathbb{R}^n$, %
	we proceed similarly to the discrete time counterpart given in \cite[Theorem 6.35]{GrunPann17} (see also \cite{BoccGrun14} for related results). %
	Hence we only give a sketch.

	\noindent First one shows using the same techniques as in \cite{ReblAllg12} %
	that for sufficiently large $T$ the optimal value function $V_T$ decreases along solutions in $K$. %
	This implies that each level set $V^{-1}([0,L])\subset K$ is forward invariant. %
	On this level set asymptotic stability with domain of attraction $V^{-1}([0,L])$ follows with usual Lyapunov function arguments. %
	The second statement follows by estimating the size of the level sets $V_T^{-1}([0,L])$ exactly as in the proof of \cite[Theorem 6.35]{GrunPann17}.
\end{proof}

\noindent \cref{ass:GrowthCondition_nonglobal} is a sufficient condition for stability that is particularly suitable for the class of systems and cost functions that we will investigate in this paper. Of course, for other systems and cost functions other sufficient conditions may be better suited. Two examples will be discussed in \cref{rem:conditions} at the end of \cref{sec:CostControllability}.

Monotonicity of the (growth) function $B_K$ in \cref{ass:GrowthCondition_nonglobal} can be assumed w.l.o.g.\ %
since the left hand side of inequality~\cref{eq:GrowthCondition} is monotonically increasing due to the absence of a terminal cost. %
Furthermore, the condition $\limsup_{t \rightarrow \infty} B_K(t)/t < 1$ suffices %
to ensure the existence of a bounded (growth) function~$B_K$, see~\cite[Proposition\,6]{MullWort17}. %
Using the abbreviation $\gamma := \limsup_{t \rightarrow \infty} B_K(t)$, \cref{ass:GrowthCondition_nonglobal} ensures the inequality
\begin{equation}\nonumber
	\sup_{\hat{x} \in K \setminus \{0\}} \left( \frac {V_t(\hat{x})}{\ell^\star(\hat{x})} \right) \leq \gamma \qquad\forall\,t \geq 0.
\end{equation}
In particular, this implies that the value function~$V_t$ cannot contain terms of lower order than the \textit{minimized} stage cost $\ell^\star(\hat{x}) = \inf_{u \in \mathbb{R}^m} \ell(\hat{x},u)$. Otherwise \cref{ass:GrowthCondition_nonglobal} cannot hold in a neighbourhood of the origin and, thus, on the set $K$.

\subsection{Homogeneous Systems and quadratic stage costs}

The following definition of homogeneity is taken from~\cite{Grun00} with a slightly adapted notation. % adaptation in order to take care of the fact that a given system can satisfy different homogeneity conditions, see, e.g.\ $\dot{x}(t) = u(t)$ or \cref{RemarkDegreeHomogeneity} below}.
\begin{definition}[Homogeneous System]\label{def:Homogeneity}\, %
	For given $r=(r_1,\ldots,r_n)\in (0,+\infty)^n$, $s = (s_1, \ldots, s_m) \in (0,+\infty)^m$, and $\tau \in (-\min_{i} r_i, \infty)$, System~\cref{eq:SystemDynamics} is said to be $(r,s,\tau)$-homogeneous if
	\begin{equation}\label{NotationHomogeneityCondition}
		f(\Lambda_\alpha x,\Delta_\alpha u) = \alpha^\tau \Lambda_\alpha f(x,u) \qquad\forall\,(x,u) \in \mathbb{R}^n \times \mathbb{R}^m \text{ and } \alpha \geq 0
	\end{equation}
	is satisfied, where the matrices $\Lambda_\alpha \in \mathbb{R}^{n \times n}$ and $\Delta_\alpha \in \mathbb{R}^{m \times m}$ are defined as
	\begin{equation}\nonumber
		\Lambda_\alpha = \left( \begin{array}{cccc}
			\alpha^{r_1} & 0 & \cdots & 0 \\
			0 & \ddots & \ddots & \vdots \\
			\vdots & \ddots & \ddots & 0 \\
			0 & \cdots & 0 & \alpha^{r_n}
		\end{array} \right) \qquad\text{and}\qquad \Delta_\alpha = \left( \begin{array}{cccc}
			\alpha^{s_1} & 0 & \cdots & 0 \\
			0 & \ddots & \ddots & \vdots \\
			\vdots & \ddots & \ddots & 0 \\
			0 & \cdots & 0 & \alpha^{s_m}
		\end{array} \right).
	\end{equation}
\end{definition}

\noindent Homogeneity can be considered as a generalization of linearity. For linear systems, i.e.\ $\dot{x}(t) = Ax(t) + Bu(t)$, homogeneity is trivially satisfied with $\tau = 0$ and $r_i = 1 = s_j$ for $(i,j) \in \{1,\ldots,n\} \times \{1,\ldots,m\}$. Using \cref{def:Homogeneity}, one easily obtains the identity
\begin{equation}\label{eq:HomogeneityTrajectory}
	x(t;\Lambda_\alpha x^0, \Delta_\alpha u(\alpha^\tau \cdot)) = \Lambda_\alpha x(\alpha^\tau t;x^0,u)
\end{equation}
for all $t \in I_{x^0} \cap [0,\infty)$. For $\tau = 0$, the identity~\cref{eq:HomogeneityTrajectory} simplifies to $x(t;\Lambda_\alpha x^0, \Delta_\alpha u) = \Lambda_\alpha x(t;x^0,u)$.

We consider the following example in order to show that the design of stage cost in MPC matters.
\begin{example}\label{ExampleNonholonomicRobotHomApprox}
	Let the system dynamics~\cref{eq:SystemDynamics} be given be
	\begin{equation}\label{eq:ExampleNonholonomicRobotHomApprox}
		\dot{x}(t) = f(x(t),u(t)) := \left( \begin{array}{c} u_1(t) \\ x_3(t) u_1(t) \\ u_2(t) \end{array} \right) %
		= \left( \begin{array}{c} 1 \\ x_3(t) \\ 0 \end{array} \right) u_1(t) + \left( \begin{array}{c} 0 \\ 0 \\ 1 \end{array} \right) u_2(t).
	\end{equation}
\end{example}

\noindent \cref{ExampleNonholonomicRobotHomApprox} is governed by homogeneous system dynamics since Condition~\cref{NotationHomogeneityCondition} holds with $\tau = 0$, $r_2 = 2$, and $r_1=r_3=s_1=s_2 = 1$. Moreover, the system is null controllable in finite time, which means that starting from any given state one can steer the control system to $0 \in \mathbb{R}^3$ in finite time (even in arbitrary small positive time). This property follows from the Rashevski-Chow Theorem~\cite[Theorem 3.18]{Coro07}. It can also be checked directly as follows.
Ensure that the $x_3$-component is not equal to $0$ by using $u_2$. Then take $u_2=0$ (so $x_3$ does not change) and use $u_1$ to steer the $x_2$-component to $0$. Then take $u_1=0$ (so $x_2$ remains equal to $0$) and set the $x_3$-component equal to $0$ using $u_2$. Finally, take $u_2=0$ (so both $x_2$ and $x_3$ remain equal to $0$) and use $u_1$ to steer the $x_1$-component to $0$. %
It follows from this method to steer the control system \cref{eq:ExampleNonholonomicRobotHomApprox} to $0\in \mathbb{R}^3$ that the value function~$V_t$ is uniformly bounded w.r.t.\ $t \in \mathbb{R}_{\geq 0}$ on compact sets of $\mathbb{R}^3$. %
Nevertheless, one can show that there does not exist a (finite) prediction horizon %
such that the origin is asymptotically stable w.r.t.\ the MPC closed loop. %
\begin{proposition}
	We consider the MPC scheme presented in \cref{alg:MPC} %
	for the combination of system~\cref{eq:ExampleNonholonomicRobotHomApprox} and the quadratic stage cost $\ell(x,u) = x^\top Q x + u^\top R u$ %
	with positive definite matrices $Q \in \mathbb{R}^{3 \times 3}$ and $R \in \mathbb{R}^{2 \times 2}$. %
	Then, for arbitrary but fixed prediction horizon~$T$, the origin is not (locally) asymptotically for the MPC closed loop.
\end{proposition}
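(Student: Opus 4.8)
The plan is to disprove local \emph{attractivity} of the origin, which already excludes asymptotic stability: I will show that for every $\rho>0$ there is an initial state $\hat x^0$ with $\|\hat x^0\|<\rho$ whose MPC closed-loop trajectory does not converge to $0$. The natural candidates are the states $\hat x_a:=(0,a,0)$ with $a\in(0,\rho)$ small, i.e.\ states on the axis along which the linearization of \cref{eq:ExampleNonholonomicRobotHomApprox} at $0$ (namely $\dot x_1=u_1$, $\dot x_2=0$, $\dot x_3=u_2$) loses controllability. Write $\hat x_k:=x(k\delta;\hat x_a,\mu_{\mathrm{MPC}})$, and $u^\star_k,x^\star_k$ for the optimal control/trajectory of the horizon-$T$ problem from $\hat x_k$, so $\hat x_{k+1}=x^\star_k(\delta)$; I argue by contradiction, assuming $\hat x_k\to 0$.

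The structural obstruction is a mismatch between a \emph{linear} controllability lower bound and the \emph{quadratic} stage cost. For \cref{eq:ExampleNonholonomicRobotHomApprox}, the quantity $y(x):=x_2-x_1x_3$ obeys $\dot y=-x_1u_2$ along every admissible trajectory, so $|y(x(t))-y(x(0))|\le\|x_1\|_{L^2[0,t]}\|u_2\|_{L^2[0,t]}$. Since $\ell(x,u)=x^\top Qx+u^\top Ru\ge\lambda_{\min}(Q)\|x\|^2+\lambda_{\min}(R)\|u\|^2$, a control of cost $J$ on $[0,t]$ has $\|x_1\|_{L^2}^2\le J/\lambda_{\min}(Q)$ and $\|u_2\|_{L^2}^2\le J/\lambda_{\min}(R)$, whence changing $y$ by $\Delta$ costs at least $c_1|\Delta|$ with $c_1:=\sqrt{\lambda_{\min}(Q)\lambda_{\min}(R)}>0$. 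By contrast $V_T(x)\le T\,x^\top Qx\le c_2\|x\|^2$ with $c_2:=T\lambda_{\max}(Q)$, using $u\equiv0$ (for which $\dot x\equiv0$). Combining these two facts along the closed loop — the optimal control on $[0,\delta]$ has cost $\le V_T(\hat x_k)$ —
\begin{equation}\nonumber
  |y(\hat x_{k+1})-y(\hat x_k)|\ \le\ \frac{V_T(\hat x_k)}{c_1}\ \le\ \frac{c_2}{c_1}\,\|\hat x_k\|^2 ,
\end{equation}
while $y(\hat x_a)=a$ and, in a fixed small ball, $|x_2|\ge|y(x)|-\tfrac12\|x\|^2$. (The same pair of bounds is exactly why \cref{ass:GrowthCondition_nonglobal} fails here in every neighbourhood of $0$: along the $x_2$-axis $|y|\asymp\|x\|$, so reaching the origin requires total cost $\ge c_1|y|$, giving $V_\infty(x)/\ell^\star(x)\gtrsim\|x\|/\|x\|^2\to\infty$, which no finite horizon can repair.)

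To finish, I would show that the closed loop started at $\hat x_a$ remains, for all $k$, in a neighbourhood of the $x_2$-axis on which $|x_2|\ge a/4>0$ — contradicting $\hat x_k\to 0$ and, $a$ being arbitrary, proving non-attractivity. Two things must be controlled: (i) the transverse components $x_1,x_3$ of $\hat x_k$ must stay of order $a^2$ (hence much smaller than $a$), uniformly in $k$ and in $\delta\in(0,T)$; and (ii) the per-step change of $y$ in the displayed inequality must be shown to have total sum $<a/2$. For (i) the heuristic is that for $a$ small, moving the transverse data off the axis is never profitable on horizon $T$: displacing $x_1,x_3$ to magnitude $\beta$ over a duration $\le T$ moves $x_2$ by only $O(\beta^2)$ (because $\dot x_2=x_3u_1$ is quadratic in the transverse data), hence lowers the dominant running-cost term $q_{22}x_2^2$ by only $O(a\beta^2)$, which for $a<a_0(T,Q,R)$ is outweighed by the $\Omega(\beta^2)$ cost of the displacement; so $u^\star_k$ essentially solves the decoupled linear-quadratic problem ``drive $x_1,x_3\to0$'', contracting the already-small transverse data. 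Making this perturbative picture rigorous — so that it yields a genuinely forward-invariant, $a$-independent neighbourhood of the segment $\{(0,s,0):0\le s\le 2a\}$ together with the summability in (ii) (obtained, e.g., from finiteness of the accumulated running cost via a telescoped relaxed dynamic-programming inequality, the missing-terminal-cost error being controlled by $\|\hat x_{k+1}\|^2$) — is the technical core: it is the analogue for \cref{eq:ExampleNonholonomicRobotHomApprox} of the analysis carried out for the mobile robot in \cite{MullWort17}, and this propagation step, rather than the identification of the obstruction, is where the real work lies.
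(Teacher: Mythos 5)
Your write-up correctly identifies a genuine obstruction (the quantity $y(x)=x_2-x_1x_3$ with $\dot y=-x_1u_2$, the resulting lower bound $c_1|\Delta y|$ on the cost of moving $y$, and the upper bound $V_T(x)\le T\lambda_{\max}(Q)\|x\|^2$ via $u\equiv 0$), but as a proof of the proposition it has a genuine gap, and you name it yourself: steps (i) and (ii) are only sketched, and they are not routine. Worse, the one quantitative closed-loop estimate you actually establish, $|y(\hat x_{k+1})-y(\hat x_k)|\le \frac{c_2}{c_1}\|\hat x_k\|^2$, is by itself compatible with $\hat x_k\to 0$: a recursion of the type $y_{k+1}\ge y_k-Cy_k^2$ allows $y_k$ to decay like $1/(Ck)$, so without the summability claim (total variation of $y$ less than $a/2$) no contradiction follows, and that summability is exactly the part you defer. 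The route you suggest for it is also problematic: a relaxed dynamic-programming inequality of the form $V_T(\hat x_{k+1})\le V_T(\hat x_k)-\alpha\int_0^\delta\ell$ with $\alpha>0$ is precisely what requires a cost-controllability-type bound, which, as you observe, fails in every neighbourhood of the origin here; and the variant with a ``missing terminal cost'' error of order $\|\hat x_{k+1}\|^2$ is circular, since under the contradiction hypothesis $\hat x_k\to 0$ you get no summability of $\|\hat x_k\|^2$. Similarly, the forward-invariance claim (i) (transverse components stay $O(a^2)$ uniformly in $k$) is argued only heuristically; making it rigorous needs a quantitative characterization of the finite-horizon optimizer, i.e.\ essentially the Pontryagin-type analysis of \cite{MullWort17}. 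So what you have is a correct identification of why \cref{ass:GrowthCondition_nonglobal} fails plus a program for disproving attractivity, not a proof.

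For comparison, the paper's argument is much more direct and avoids any propagation analysis along the closed loop: using the maximum principle (adapting \cite[Subsection~4.1]{MullWort17}, with the adjoint condition replaced by $(Qx^0)_3=0=(Qx^0)_1+(Qx^0)_2x_3^0$) one shows that for every $\varepsilon>0$ there are initial values $x^0$ with $\|x^0\|\le\varepsilon$ at which $u^\star\equiv 0$ is the \emph{unique} optimal control for the horizon-$T$ problem. Since the system \cref{eq:ExampleNonholonomicRobotHomApprox} is driftless, the MPC closed loop of \cref{alg:MPC} then never leaves $x^0$, so the origin is not attractive. Note that for diagonal $Q$ your candidate points $(0,a,0)$ satisfy exactly this adjoint condition, i.e.\ the paper shows these points are equilibria of the closed loop rather than points near which $y$ decays slowly; establishing that ``frozen'' behaviour (or your invariance/summability claims) is where the actual work lies, and it is missing from your proposal.
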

\begin{proof}
	Firstly, it can be shown that, for every $\varepsilon > 0$, %
	there exists an initial value $x^0 \in \{x \in \mathbb{R}^3 \mid \| x \| \leq \varepsilon\}$ such that $u^\star \equiv 0$ is the unique optimal control. %
	The proof is almost literally the same as its analogon for the nonholonomic robot presented in~\cite[Subsection~4.1]{MullWort17} if %
	Condition~(13) is replaced by $(Qx^0)_3 = 0 = (Qx^0)_1 + (Qx^0)_2 \theta^0$ in view of the slightly different adjoint equation. %
	Hence, the closed-loop trajectory emanating at $x^0$ stays at $x^0$ forever and does, thus, not converge to the origin, which shows the assertion.
\end{proof}
We note that this implies that \cref{ass:GrowthCondition_nonglobal} must be violated. %
In the following, our goal is to construct a stage cost %
such that asymptotic stability of the origin w.r.t.\ the MPC closed loop %
is ensured for a sufficiently long prediction horizon~$T$ for a class of systems %
including both \cref{ExampleNonholonomicRobotHomApprox} and the mentioned nonholonomic robot example %
provided that the homogeneous (approximation of the) system is globally asymptotically null controllable. %
Moreover, we want to keep the construction simple by solely replacing the quadratic performance measure by one based on the \textit{dilated norm} $N: \mathbb{R}^n \rightarrow [0,\infty)$, i.e. 
\begin{equation}\label{NotationDilatedNorm}
	N(x) := \left( \sum_{i=1}^n x_i^{\frac d{r_i}} \right)^{\frac 1d} \qquad\text{ with }\qquad d := 2 \prod_{i=1}^n r_i.
\end{equation}
Note that the \textit{dilated norm} satisfies the equality
\begin{equation}\label{eq:HomogeneityDilatedNormInterplay}
	N(\Lambda_\alpha x) = \sqrt[d]{\sum_{i=1}^n \left( \alpha^{r_i} x_i \right)^{\frac d {r_i}}} = \sqrt[d]{\alpha^d \sum_{i=1}^n x_i^{\frac d {r_i}}} = \alpha N(x) \qquad\forall\, (\alpha,x) \in \mathbb{R}_{\geq 0} \times \mathbb{R}^n
\end{equation}
and that it is positive on $\mathbb{R}^n\setminus\{0\}$. However it is not a norm unless $\Lambda_\alpha = \alpha I$.

%To this end, ensuring cost controllability is the decisive step. Then, asymptotic stability can be directly concluded using \cref{thm:AsymptoticStabilityMPC}.

%satisfies both definiteness and the triangle inequality, i.e.\ $N(x+y) \leq N(x) + N(y)$ holds for all $x,y \in \mathbb{R}^n$. The latter can be seen from
%\begin{align}
%		N(x+y) & = \sqrt[d]{ \sum_{i=1}^n \left( (x_i+y_i)^{r_i^{-1}} \right)^d } %
%		\leq \sqrt[d]{ \sum_{i=1}^n \left( x_i^{r_i^{-1}} + y_i^{r_i^{-1}} \right)^d } %
%		\leq N(x) + N(y), \label{LemmaDilatedNormTriangleInequality}
%\end{align}
%where we have used that the root is subadditive $(x_i+y_i)^{r_i^{-1}} \leq x_i^{r_i^{-1}} + y_i^{r_i^{-1}}$ %
%(zero at zero and concavity due to $r_i^{-1} \in (0,1]$) for all $i \in \{1,\ldots,n\}$ to derive the first inequality and %
%used the triangle inequality of the $d$-norm for the second. We define the open \textit{dilated ball} (ball w.r.t.\,to the dilated norm~$N$) with \textit{radius}~$\delta$ centered at $\bar{x}$ as
%\begin{equation}
%	\mathcal{B}_\delta(\bar{x}) := \{ x \in \mathbb{R}^n \mid N(x-\bar{x}) < \delta \}. \nonumber
%\end{equation}
%
%\ \newline\noindent \textbf{Question}: Lars, ben\"{o}tigen wir f\"{u}r die Dreiecksungleichung wirklich $r_i \geq 1$? %
%(Sollte vermutlich leicht zu beantworten sein --~ich wollte aber erstmal weitermachen.)
%
%{\tt F\"ur $r_i<1$ geht es schief. Betrachte z.B.\ $n=1$ und $r_1=1/2$. Dann ist $d=1$ und $N(x)=x^2$, was die Dreiecksungleichung sicherlich nicht erf\"ullt.}

\section{Cost Controllability for Homogeneous Systems}\label{sec:CostControllability}

We consider homogeneous systems and %
propose to define the stage cost~$\ell: \mathbb{R}^n \times \mathbb{R}^m \rightarrow \mathbb{R}_{\geq 0}$ by
\begin{equation}\label{eq:StageCosts}
	\ell(x,u) := N(x)^d + \sum_{j=1}^m u_j^{\frac d{s_j}} = \sum_{i=1}^n x_i^{\frac d{r_i}} + \sum_{j=1}^m u_j^{\frac d{s_j}}.
\end{equation}
The stage cost is based on the \textit{dilated norm} and the coefficients $r_i$, $i \in \{1,\ldots,n\}$, and $s_j$, $j \in \{1,\ldots,m\}$, from \cref{def:Homogeneity}. %
Note that the definition of $\ell$ in~\cref{eq:StageCosts} leads to quadratic stage cost for linear systems $\dot{x}(t) = Ax(t) + Bu(t)$. %
Moreover, the definition is compatible with homogeneity analogously to~\cref{eq:HomogeneityDilatedNormInterplay}.
\begin{proposition}\label{prop:StageCosts}
	The stage costs~$\ell$ are homogeneous, i.e.
	\begin{equation}\label{eq:HomogeneityStageCostsInterplay}
		\ell(x(s;\Lambda_\alpha x^0,\Delta_\alpha u(\alpha^\tau \cdot)),\Delta_\alpha u(\alpha^\tau s)) = \alpha^d \ell(x(\alpha^\tau s;x^0,u),u(\alpha^\tau s))
	\end{equation}
	holds for $\alpha \geq 0$. For $\tau = 0$, equality~\cref{eq:HomogeneityStageCostsInterplay} simplifies to
	\begin{equation}\label{eq:HomogeneityStageCostsInterplayTauEqualZero}
		\ell(x(s;\Lambda_\alpha x^0,\Delta_\alpha u),\Delta_\alpha u(s)) = \alpha^d \ell(x(s;x^0,u),u(s)).
	\end{equation}
\end{proposition}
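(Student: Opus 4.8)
\noindent The plan is to split the assertion into a purely algebraic scaling property of the map $\ell$ and then to combine it with the dynamic identity \cref{eq:HomogeneityTrajectory} that is already available for homogeneous trajectories. First I would prove the pointwise (static) identity
\[
	\ell(\Lambda_\alpha \xi,\Delta_\alpha v) = \alpha^d\,\ell(\xi,v) \qquad \forall\,(\xi,v) \in \mathbb{R}^n \times \mathbb{R}^m,\ \alpha \geq 0.
\]
For the state-dependent part $N(x)^d = \sum_{i=1}^n x_i^{d/r_i}$ this is exactly \cref{eq:HomogeneityDilatedNormInterplay} raised to the power~$d$, i.e.\ $N(\Lambda_\alpha \xi)^d = (\alpha N(\xi))^d = \alpha^d N(\xi)^d$. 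For the control-dependent part it follows from the diagonal structure of $\Delta_\alpha$ in \cref{def:Homogeneity}: since $(\Delta_\alpha v)_j = \alpha^{s_j} v_j$, one has $(\Delta_\alpha v)_j^{d/s_j} = \alpha^d v_j^{d/s_j}$ for every $j \in \{1,\dots,m\}$, and summing over $j$ produces the common factor~$\alpha^d$. Adding the two contributions and invoking the definition~\cref{eq:StageCosts} of $\ell$ gives the displayed identity.

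Second I would substitute the trajectory identity \cref{eq:HomogeneityTrajectory}, i.e.\ $x(s;\Lambda_\alpha x^0,\Delta_\alpha u(\alpha^\tau\cdot)) = \Lambda_\alpha x(\alpha^\tau s;x^0,u)$, into the first argument of $\ell$ on the left-hand side of~\cref{eq:HomogeneityStageCostsInterplay}. The second argument there equals $\Delta_\alpha\big(u(\alpha^\tau s)\big)$, so applying the static identity with $\xi = x(\alpha^\tau s;x^0,u)$ and $v = u(\alpha^\tau s)$ immediately returns $\alpha^d\,\ell(x(\alpha^\tau s;x^0,u),u(\alpha^\tau s))$, which is the right-hand side of~\cref{eq:HomogeneityStageCostsInterplay}. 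The case $\tau = 0$ then follows by specialization, since \cref{eq:HomogeneityTrajectory} reduces to $x(s;\Lambda_\alpha x^0,\Delta_\alpha u) = \Lambda_\alpha x(s;x^0,u)$ and the same two steps apply verbatim, yielding~\cref{eq:HomogeneityStageCostsInterplayTauEqualZero}.

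I do not expect a genuine obstacle here; once \cref{eq:HomogeneityTrajectory} and \cref{eq:HomogeneityDilatedNormInterplay} are in hand the proof is essentially bookkeeping. The points that need a little care are keeping the time argument $\alpha^\tau s$ consistent on both sides (so that the control enters dilated by $\Delta_\alpha$ on the left but un-dilated on the right, exactly as in the static identity), restricting $s$ to the common interval of existence on which \cref{eq:HomogeneityTrajectory} holds, and noting that the possibly non-integer exponents $d/r_i$, $d/s_j$ are treated by the same convention already used in \cref{NotationDilatedNorm} and \cref{def:Homogeneity}, entering the scaling computation only through the relation $(\alpha^{r_i})^{d/r_i} = \alpha^d$ valid for $\alpha \geq 0$.
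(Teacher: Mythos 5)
Your proposal is correct and follows essentially the same route as the paper: both arguments substitute the trajectory identity~\cref{eq:HomogeneityTrajectory} into the state argument of~$\ell$, use~\cref{eq:HomogeneityDilatedNormInterplay} for the dilated-norm term, and scale the control term componentwise via $(\alpha^{s_j}u_j)^{d/s_j}=\alpha^d u_j^{d/s_j}$. Factoring the computation through an explicit static identity $\ell(\Lambda_\alpha\xi,\Delta_\alpha v)=\alpha^d\ell(\xi,v)$ is only a presentational reorganization of the paper's direct calculation.
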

\begin{proof} A direct calculation shows
	\begin{align}
		\ell(x(s;\Lambda_\alpha x^0,\Delta_\alpha u(\alpha^\tau \cdot)),\Delta_\alpha u(t)) & = N(\underbrace{x(s;\Lambda_\alpha x^0, \Delta_\alpha u(\alpha^\tau \cdot))}_{\stackrel{\cref{eq:HomogeneityTrajectory}}{=} \Lambda_\alpha x(\alpha^\tau s;x^0,u)})^d + \sum_{j=1}^m \left( \alpha^{s_j} u_{j}(t) \right)^{\frac d {s_j}} \nonumber \\
		%& = \sum_{i=1}^n \left(\alpha^{r_i} x_i(\alpha^\tau s;x^0,u) \right)^{\frac d {r_i}} + \alpha^d \sum_{j=1}^m u_{j}(t)^{\frac d {s_j}} \nonumber \\
		& \stackrel{\cref{eq:HomogeneityDilatedNormInterplay}}{=} \alpha^d N(x(\alpha^\tau s;x^0,u))^{d} + \alpha^d \sum_{j=1}^m u_{j}(t)^{\frac d {s_j}} \nonumber \\
		%& = \alpha^d \left( \sum_{i=1}^n x_i(\alpha^\tau s;x^0,u)^{\frac d {r_i}} + \sum_{j=1}^m u_{j}(t)^{\frac d {s_j}} \right) \nonumber \\
		& = \alpha^d\, \ell(x(\alpha^\tau s;x^0,u),u(t)), \nonumber
	\end{align}
	i.e.~the assertion if $t$ is replaced by $\alpha^\tau s$. %However, this coupling is not needed in the presented line of reasoning.
\end{proof}
\noindent Our goal in this section is to verify cost controllability for homogeneous, globally asymptotically null controllable systems. %
\begin{definition}\label{def:AsymptoticStabilizability}
	The system~\cref{eq:SystemDynamics} is said to be globally asymptotically null controllable if, for each $x^0 \in \mathbb{R}^n$, there exists a control function $u^0 \in \mathcal{L}^{\infty}_{\operatorname{loc}}([0,\infty),\mathbb{R}^m)$ such that $\lim_{t \rightarrow \infty} \|x(t;x^0,u^0)\| = 0$ holds.
\end{definition}
\noindent Note that a locally asymptotically null controllable homogeneous system is always globally asymptotically null controllable.

Inequality~\cref{eq:GrowthCondition} is equivalent to
\begin{equation}\label{eq:GrowthConditionInequalityHomogeneousSystems}
	V(t) \leq B_K(t) N(x^0)^d \qquad\forall\,(t,x^0) \in \mathbb{R}_{\geq 0} \times K,
\end{equation}
using the stage cost~\cref{eq:StageCosts}. %
Under this condition, asymptotic stability of the origin w.r.t.\ the MPC closed loop is ensured for a sufficiently large prediction horizon~$T$ using \cref{thm:AsymptoticStabilityMPC_nonglobal} since Condition~\cref{eq:StageCostsCondition} is satisfied.

Before we proceed, we consider a simple example in order to illustrate and motivate our results.
\begin{example}\label{Example:OneDimensional}
	Let the system dynamics~\cref{eq:SystemDynamics} be given by
	\begin{equation}\label{sys-1dk}
		\dot{x}(t) = |x(t)|^k \textrm{sign}(x) + u \qquad\text{with $k \in (0, +\infty)$.}
	\end{equation}
	Since we have
	\begin{eqnarray*}
		f(\Lambda_\alpha x, \Delta_\alpha u) & = & %
		%|\alpha^r x|^k\text{sign}(x) + \alpha^s u = %
		|\alpha^r x|^k\text{sign}(x) + \alpha^s u, \\
		\alpha^\tau \Lambda_\alpha f(x,u) & = & \alpha^{\tau+r}(|x|^k\text{sign}(x) + u),
	\end{eqnarray*}
	we get the $(r,s,\tau)$-homogeneity of \cref{sys-1dk} if and only if $\tau + r = rk$ and $\tau + r = s$. This leads to the following three cases:
	\begin{itemize}
		\item $k = 1$: Degree of homogeneity zero, i.e.\ $\tau = 0$ with coefficients $r = s$
		\item $k \in (0,1)$: Negative degree of homogeneity, i.e.\ $\tau < 0$ with coefficients $r = s/k$
		\item $k > 1$: Positive degree of homogeneity, i.e.\ $\tau > 0$ with $r = s/k$
	\end{itemize}
	Next, we compute the infinite horizon optimal value function using the Hamilton-Jacobi-Equation
	\begin{equation}\label{MotivatingExampleWithInputHJB}
		\min_{u \in \mathbb{R}^m} \Big( \langle \nabla V(x),f(x,u) \rangle + \ell(x,u) \Big) = 0
	\end{equation}
	with $m = 1$. To this end, we quadratically penalize the control effort to simplify the following calculations %
	by using the stage cost $\ell: \mathbb{R}^n \times \mathbb{R}^m$, $n =m = 1$, defined by $\ell(x,u) = (x^2)^d + u^2$, %
	which yields $u^\star = - \frac 12 V^\prime(x)$. %
	Then, plugging this expression into~\cref{MotivatingExampleWithInputHJB}, i.e.\ $V^\prime(x) \left( |x|^k\text{sign}(x) + u^\star \right) + \ell(x,u^\star) = 0$, %
	leads to $V^\prime(x) |x|^k\text{sign}(x) - \frac 14 V^\prime(x)^2 + (x^2)^d = 0$, or %
	(only the root having the sign of $x$ has to be taken into account since $V$ is increasing on $[0,+\infty)$ and decreasing on $(-\infty,0]$)
	\begin{equation}\nonumber
		V^\prime(x) = 2 |x|^k \text{sign}(x) + \text{sign}(x)\sqrt{4 |x|^{2k} + 4 (x^2)^d } \stackrel{d = k}{=} 2(1+\sqrt{2}) |x|^k \text{sign}(x).
	\end{equation}
	Overall, we get the value function
	\begin{equation}\nonumber
		V(x) = \frac {2(1 + \sqrt{2})}{k+1} \ |x|^{k+1} \qquad\text{ and, thus, }\qquad \frac {V(x)}{\ell^\star(x)} = \frac {2(1 + \sqrt{2})}{k+1} \ |x|^{1-k}.
	\end{equation}
	In conclusion, we get for
	\begin{itemize}
		\item $k=1$: The growth bound is uniformly upper bounded by $1+\sqrt{2}$, %
			i.e.\ cost controllability in the sense of \cref{ass:GrowthCondition_nonglobal} with $K = \mathbb{R}^n=\mathbb{R}$.
		\item $k \in (0,1)$: The growth bound is uniformly upper bounded on each compact set~$K$ by $\frac {2(1+\sqrt{2})}{k+1} \max_{x \in K} \left\{ |x|^{1-k} \right\}$, %
			i.e.\ cost controllability in the sense of \cref{ass:GrowthCondition_nonglobal}.
		\item $k > 1$: the quotient $V(x)/\ell^\star(x)$ is uniformly upper bounded on $\mathbb{R} \setminus (-\varepsilon, \varepsilon)$ %
			for arbitrary but fixed $\varepsilon > 0$ by $\frac {2(1+\sqrt{2})}{k+1} \max_{x \in \mathbb{R} \setminus (-\varepsilon,\varepsilon)} \left\{ |x|^{1-k} \right\}$.
	\end{itemize}
	Moreover, note that the choice $d=1$, i.e.\ purely quadratic costs, %
	implies for $k \in (0,1)$ that the value function still exhibits a term of order $1+k$ but $\ell^\star(x) = x^2$ holds. %
	Hence, there does not exist a uniform bound for the quotient $V(x) / \ell^\star(x)$ for $x \rightarrow 0$, $x \neq 0$.	
\end{example}
\begin{remark}\label{RemarkDegreeHomogeneity}
	In general, the degree of homogeneity ($\tau < 0$, $\tau = 0$, and $\tau > 0$) is not unique, %
	which can be seen from the (driftless) system~\cref{ExampleNonholonomicRobotHomApprox}. %
	Here, Condition~\cref{NotationHomogeneityCondition} is also satisfied for the combinations
	\begin{equation*}
		\text{$\tau = 0.5$, $r_1=r_3=1$, $r_2=2$, and $s_1 = s_2 = 1.5$}
	\end{equation*}
	and
	\begin{equation*}
		\text{$\tau = -0.5$, $r_1=r_3=1$, $r_2=2$, and $s_1 = s_2 = 0.5$}.
	\end{equation*}	
	We emphasize that the relative weighting of the control in comparison to the penalization of deviations w.r.t.\ the state depends on the choice, %
	which explains the different behavior of the MPC closed loop. For control affine systems, we set $s_j = 2$ whenever possible in view of the Hamilton-Jacobi-Equation.
\end{remark}

\subsection{Degree of homogeneity zero}
In this subsection, we derive cost controllability and, then, conclude asymptotic stability for globally asymptotically null controllable systems with degree of homogeneity $\tau = 0$. %
\begin{theorem}[Cost Controllability]\label{thm:CostControllability}
	Let the system~\cref{eq:SystemDynamics} be globally asymptotically null controllable and $(r,s,\tau=0)$-homogeneous. %
	In addition, let the stage cost be defined by~\cref{eq:StageCosts}. Then, cost controllability, i.e.~inequality~\cref{eq:GrowthConditionInequalityHomogeneousSystems}, holds. %
	In particular, given $\delta>0$, for any sufficiently large $T>0$ the origin is globally asymptotically stable for the MPC closed loop.
\end{theorem}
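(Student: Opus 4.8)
The plan is to exploit the scaling behaviour recorded in \cref{eq:HomogeneityTrajectory} and \cref{prop:StageCosts} in order to reduce the verification of \cref{eq:GrowthConditionInequalityHomogeneousSystems} with $K=\R^n$ to a statement on the compact homogeneous ``unit sphere'' $S:=\{x\in\R^n\,|\,N(x)=1\}$. Since $\tau=0$, equation~\cref{eq:HomogeneityStageCostsInterplayTauEqualZero} integrated over $[0,t]$ shows that a control $u$ steering $x^0$ induces, for every $\alpha>0$, the control $\Delta_\alpha u$ steering $\Lambda_\alpha x^0$ at cost $\alpha^d$ times the original one; as $u\mapsto\Delta_\alpha u$ is a bijection of $\mathcal{L}^\infty([0,t],\R^m)$, this yields $V_t(\Lambda_\alpha x^0)=\alpha^d V_t(x^0)$. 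Together with $\ell^\star(x)=N(x)^d$ and $N(\Lambda_\alpha x)=\alpha N(x)$ from \cref{eq:HomogeneityDilatedNormInterplay}, writing an arbitrary $x^0\neq 0$ as $x^0=\Lambda_{N(x^0)}\xi$ with $\xi:=\Lambda_{1/N(x^0)}x^0\in S$, the inequality \cref{eq:GrowthConditionInequalityHomogeneousSystems} reduces to exhibiting a finite constant $\gamma$ with $V_t(\xi)\le\gamma$ for all $t\ge 0$ and all $\xi\in S$; then \cref{ass:GrowthCondition_nonglobal} holds on $K=\R^n$ with the constant function $B_{\R^n}\equiv\gamma$.

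To produce such a $\gamma$, I would first establish a uniform, finite-time near-controllability property on $S$: there exist $\bar T>0$ and $\bar J<\infty$ such that from every $\xi\in S$ the state can be driven into the open dilated ball $\{N(x)<1/2\}$ within time at most $\bar T$ and at cost at most $\bar J$. For a single $\xi\in S$ this follows from global asymptotic null controllability (\cref{def:AsymptoticStabilizability}): pick $u^0_\xi$ with $x(\cdot;\xi,u^0_\xi)\to 0$, and let $T_\xi$ be the first time at which $N(x(\cdot;\xi,u^0_\xi))$ reaches $1/4$; on $[0,T_\xi]$ the trajectory stays in a compact subset of $\R^n\setminus\{0\}$, where $f$ is locally Lipschitz, so by continuous dependence on the initial datum there is a neighbourhood of $\xi$ in $S$ on which $u^0_\xi$ still drives the state into the \emph{open} set $\{N(x)<1/2\}$ by time $T_\xi$ with cost at most $2\int_0^{T_\xi}\ell(x(s;\xi,u^0_\xi),u^0_\xi(s))\,\mathrm ds$. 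Compactness of $S$ then yields a finite subcover and hence the claimed uniform $\bar T,\bar J$.

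The constant $\gamma$ is obtained by iterating this step with the homogeneous rescaling. From $\xi\in S$, drive the state to some $x^1$ with $N(x^1)=:\beta_1<1/2$; write $x^1=\Lambda_{\beta_1}\xi_1$ with $\xi_1\in S$, apply the $\xi_1$-strategy rescaled by $\Delta_{\beta_1}$ --~no time rescaling occurs because $\tau=0$, cf.~\cref{eq:HomogeneityTrajectory}~-- and use \cref{eq:HomogeneityStageCostsInterplayTauEqualZero} to see that the resulting trajectory reaches $\{N(x)<\beta_1/2\}\subset\{N(x)<1/4\}$ at cost at most $\beta_1^d\bar J\le 2^{-d}\bar J$. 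Repeating, after $k$ phases one is inside $\{N(x)<2^{-k}\}$ at total cost at most $\bar J\sum_{j\ge 0}2^{-jd}=:\gamma<\infty$ (the series converging since $d>0$), and $N(x(\cdot))\to 0$ forces the trajectory to the origin. Truncating this strategy at time $t$ gives an admissible control on $[0,t]$ of cost at most $\gamma$ (the sup-norm of the concatenated control is bounded because each $\Delta_\beta$ with $\beta\le 1$ is a contraction, so the concatenation lies in $\mathcal{L}^\infty$; the degenerate case in which infinitely many phases accumulate before a finite time is covered by noting that the trajectory then reaches $0$, where one stays at zero cost). Hence $V_t(\xi)\le\gamma$ on $S$, which by the first paragraph establishes \cref{eq:GrowthConditionInequalityHomogeneousSystems} on $\R^n$. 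Since $\ell^\star=N(\cdot)^d$ is continuous, positive on $\R^n\setminus\{0\}$, vanishes at the origin and satisfies $\ell^\star(x)\to\infty$ as $\|x\|\to\infty$, it is bounded below and above by $\mathcal{K}_\infty$-functions of $\|\cdot\|$, so \cref{eq:StageCostsCondition} holds; \cref{thm:AsymptoticStabilityMPC_nonglobal} with $K=\R^n$ then yields, for any $\delta>0$, global asymptotic stability of the MPC closed loop for all sufficiently large $T$.

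I expect the main obstacle to be the second paragraph: passing from the pointwise, cost-free notion of asymptotic null controllability to a \emph{uniform} finite-time approximate-controllability statement on the compact sphere $S$ with a uniform cost bound. The delicate points are that the controls furnished by \cref{def:AsymptoticStabilizability} depend on $\xi$ in an uncontrolled way, so one must rely on a compactness and continuous-dependence argument, and that continuous dependence is only available away from the origin --~which is why it is essential to stop each phase at the level $N(x)=1/4$ rather than letting the trajectory run all the way to $0$, keeping the relevant trajectory arcs inside the Lipschitz region $\R^n\setminus\{0\}$.
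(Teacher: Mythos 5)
Your proposal is correct and follows essentially the same route as the paper's proof: uniform finite-time controllability with a uniform cost bound obtained from global asymptotic null controllability via continuous dependence and a finite subcover of a compact dilation level set, followed by homogeneous rescaling ($\tau=0$) and a geometric-series contraction argument yielding a bounded growth function, and finally \cref{thm:AsymptoticStabilityMPC_nonglobal} with $K=\R^n$. The only (harmless) differences are cosmetic: you work on the dilation sphere $\{N(x)=1\}$ and exploit the exact scaling $V_t(\Lambda_\alpha x)=\alpha^d V_t(x)$ to produce a constant bound, whereas the paper works on an annulus $\mathcal{N}_{c_1,c_2}$ and builds a time-dependent, periodically extended bounded $B(t)$.
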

\begin{proof}
	First, we show inequality~\cref{eq:GrowthConditionInequalityHomogeneousSystems} only for initial values~$x^0$ contained in the compact set
	\begin{equation} \nonumber
		\mathcal{N}_{c_1,c_2} := \{ x \in \mathbb{R}^n \mid c_1 \leq N(x)^d \leq c_2 \}
	\end{equation}
	with $0 < c_1 < c_2 < \infty$. Later on, we show that the same growth function~$B$ can be used for arbitrary initial values $x^0 \in \mathbb{R}^n$.

	Global asymptotical null controllability implies that, for each $x^0 \in \mathcal{N}_{c_1,c_2}$, %
	there exists a control function $%u_{x^0} 
	u^0 \in \mathcal{L}^{\infty}_{\operatorname{loc}}([0,\infty),\mathbb{R}^m)$ %
	such that $\lim_{t \rightarrow \infty} x(t;x^0,%u_{x^0}
	u^0) = 0$ holds. %
	Hence, continuity of the solution trajectory $x(\cdot;x^0,%u_{x^0}
	u^0)$ implies the existence of a time instant $%T_{x^0}
	T^0 \in (0,\infty)$ (depending on $x^0$) such that
	\begin{equation}\nonumber
		N(x(T^0;x^0,u^0))^d = \alpha\ \frac {c_1+c_2}2
	\end{equation}
	where $\alpha \in (0,1)$ is chosen such that $\alpha (c_1+c_2)/2 < c_1$ holds, %
	i.e.~$x(T^0;x^0,u^0)$ is contained in the sublevel set $\{x \in \mathbb{R}^n \mid N(x)^d < c_1\}$. %
	Due to continuity and compactness of the time interval $[0,T^0]$, there exists $\delta^0 > 0$ sufficiently small (depending on $x^0$) such that the chain of inequalities
	\begin{equation}\label{ineq:contraction}
		\alpha c_1 < N(x(T^0;\bar{x},u^0))^d < \alpha c_2
	\end{equation}
	holds for all $\bar{x} \in \mathcal{B}_{\delta^0}(x^0)$. %
	Since $\bigcup_{x^0 \in \mathcal{N}_{c_1,c_2}} \mathcal{B}_{\delta^0}(x^0)$ is an open cover of the compact set $\mathcal{N}_{c_1,c_2}$, %
	there exists a finite subcover and, thus, there exist finitely many \textit{initial values} $x^1,\ldots,x^k$, $k \in \mathbb{N}$, %
	with corresponding $u^i$, $T^i$ and $\delta^i$, satisfying
	\begin{equation}\nonumber
		\mathcal{N}_{c_1,c_2} \subset \bigcup_{i=1}^k \mathcal{B}_{\delta^i}(x^i).
	\end{equation}
	Let us set $T^\star := \min_{i=1,\ldots,k} %T_{x^0_i}
	T^i \in (0,\infty)$. Then, the following function is obviously a growth function~$B$ on the finite interval $[0,T^\star]$ for all $x^0 \in \mathcal{N}_{c_1,c_2}$:
	\begin{align}
		B(t) := & \max_{i=1,\ldots,k} \hspace*{0.15cm} \sup_{\bar{x} \in \mathcal{B}_{\delta^i}(x^i) \cap\, \mathcal{N}_{c_1,c_2}} \hspace*{-0.15cm} N(\bar{x})^{-d} %
		\int_0^t \ell(x(s;\bar{x},u^i),u^i(s))\,\mathrm{d}s \qquad\forall\,t \in [0,T^\star), \nonumber \\
		B(T^\star) := & \max_{i=1,\ldots,k} \hspace*{0.15cm} \sup_{\bar{x} \in \mathcal{B}_{\delta^i}(x^i) \cap\, \mathcal{N}_{c_1,c_2}} \hspace*{-0.15cm} N(\bar{x})^{-d} %
		\int_0^{T^i} \ell(x(s;\bar{x},u^i),u^i(s))\,\mathrm{d}s. \label{defBT*}
	\end{align}
	Note that the preceding line of argumentation implicitly implies well-posedness and boundedness of~$B$ on $[0,T^\star]$ %
	in view of the fact that $N(\bar{x})$ is uniformly bounded from below by $c_1$.\footnote{Every trajectory $x(\cdot;x^i,u^i)$ is bounded on $[0,T^\star]$. %
	In addition, $u^i$ is essentially bounded. Hence, the stage costs are also bounded. %
	Then, continuity of the solution w.r.t.\ the initial value and continuity of the stage cost imply the assertion.} %
	This construction is quite conservative and does not rely on homogeneity. %
	But it is essential to have a bounded~$B$ on a compact time interval of positive length for the following line of argumentation.
	
	Before we proceed, let us present some preliminary considerations. Let $\tilde{x}^0 \notin \mathcal{N}_{c_1,c_2}$ be arbitrary but fixed. %
	Then, if $\tilde{x}^0 \neq 0$, there exists a scaling factor $c \in (0,\infty)$ such that $\Lambda_c \tilde{x}^0 \in \mathcal{N}_{c_1,c_2}$. %
	This implies the existence of an index~$j \in \{1,2,\ldots,k\}$, such that %
	$\Lambda_c \tilde{x}^0 \in \mathcal{B}_{%\delta_{x^0_{i^\star}}}(x^0_{i^\star})
	\delta^{j}}(x^{j})$. Hence, using $\tilde{x}^0 = \Lambda_{c^{-1}} (\Lambda_c \tilde{x}^0)$, we get
	\begin{align}
		\ell(x(s;\tilde{x}^0,\Delta_{c^{-1}}u^j),\Delta_{c^{-1}}u^j(s)) %
		& \stackrel{\cref{eq:HomogeneityStageCostsInterplayTauEqualZero}}{=} c^{-d} \ell(x(s;\Lambda_c \tilde{x}^0,u^j),u^j(s))\label{eq:HomogeneityIdentity}
	\end{align}
	and, for $\ell^\star(\tilde{x}^0) := \inf_{u \in \mathbb{R}^m} \ell(\tilde{x}^0,u)$,
	\begin{equation}
		\ell^\star(\tilde{x}^0) = N(\tilde{x}^0)^d = N(\Lambda_{c^{-1}} \Lambda_{c} \tilde{x}^0)^d \stackrel{\cref{eq:HomogeneityDilatedNormInterplay}}{=} c^{-d} N(\Lambda_c \tilde{x}^0)^d = c^{-d} \ell^\star(\Lambda_c \tilde{x}^0). \label{eq:EllStar}
	\end{equation}
	Hence, showing the desired inequality (see \cref{ass:GrowthCondition_nonglobal}) for $x^0 \in \mathcal{N}_{c_1,c_2}$ suffices.
	
	The identity~\cref{eq:HomogeneityIdentity} is also the key ingredient to show the assertion for $t > T^\star$ based on the following contraction argument. %
	For each $x^0 \in \mathcal{N}_{c_1,c_2}$, there exists an index $q \in \{1,\ldots,k\}$, and a time instant $t^0 \in [T^\star,\max_{\{i=1,\ldots,k\}} T^i]$ %
	such that $x(t^0;x^0,u^q) \in \alpha \mathcal{N}_{c_1,c_2}$ and
	\begin{equation}\nonumber
		\int_0^{t} \ell(x(s;x^0,u^q),u^q(s))\,\mathrm{d}s \leq B(\min\{t,T^\star\})\, N(x^0)^d \qquad\forall\,t \in [0,t^0]
	\end{equation}
	hold. %\footnote{For example, $t_{x^0} := T_{x^0_{i^\sharp}}$ is a possible choice in view of \cref{ineq:contraction}.}
	Next, using the identity~\cref{eq:HomogeneityIdentity} for $\tilde{x}^0 = x(t^0;x^0,u^q)$ with $c = \alpha^{-1} > 1$ %
	allows us to repeat the presented line of reasoning to show that the stage cost are scaled with $\alpha^d$. In addition,
	\begin{equation}\nonumber
		x(T^j;\tilde{x}^0,\Delta_\alpha u^j) = \Lambda_\alpha \underbrace{x(T^j; \Lambda_{\alpha^{-1}} \tilde{x}^0,u^j)}_{\in \alpha \mathcal{N}_{c_1,c_2}} \in \alpha^2 \mathcal{N}_{c_1,c_2}.
	\end{equation}		
	 Hence, we get the desired growth condition if we periodically extend the growth function~$B$ on $(T^\star,\infty)$ by setting
	\begin{equation}\nonumber
		B(t) := \sum_{i=0}^{\lfloor t/T^\star \rfloor - 1} \alpha^{id} B(T^\star) + \alpha^{d \lfloor t/T^\star \rfloor} B(t- T^\star \lfloor t/T^\star \rfloor) \qquad\forall\,t > T^\star,
	\end{equation}
	which is bounded by $\sum_{i=0}^\infty \alpha^{id} B(T^\star) = (1-\alpha^d)^{-1} B(T^\star) \in (0,\infty)$. This completes the proof of the first statement. Asymptotic stability then follows from \cref{thm:AsymptoticStabilityMPC_nonglobal}.
\end{proof}

\subsection{Degree of homogeneity not equal to zero}

We begin with globally asymptotically null controllable systems with negative degree of homogeneity, i.e.\ $\tau < 0$. %
Here, we show semiglobal asymptotic stability of the origin w.r.t.\ the MPC closed loop, i.e.\ local asymptotic stability of the origin with an arbitrary but fixed compact set being the domain of attraction, cf.\ \cref{thm:AsymptoticStabilityMPC_nonglobal}. We refer to \cref{Example:OneDimensional}, which shows the necessity of this restriction for the derivation of cost controllability. 

In \cref{TheoremCostControllabilityHomogeneousNegative} we extend \cref{thm:CostControllability} to homogeneous systems with negative degree.
\begin{proposition}[Cost Controllability for $\tau < 0$]\label{TheoremCostControllabilityHomogeneousNegative}
	Let the system \cref{eq:SystemDynamics} be globally asymptotically null controllable and $(r,s,\tau)$-homogeneous with $\tau < 0$. In addition, let the stage cost be defined by \cref{eq:StageCosts}.
	Then, \cref{eq:GrowthCondition}, i.e.\ cost controllability, holds on any compact set $K \subset \mathbb{R}^n$ with $0 \in {\mathrm int}\, K$. 	
	In particular, given $\delta>0$, for any sufficiently large $T>0$ the origin is semiglobally asymptotically stable for the MPC closed loop.
\end{proposition}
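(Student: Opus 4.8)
The plan is to follow the architecture of the proof of \cref{thm:CostControllability}, the only genuinely new ingredient being that the time rescaling in the homogeneity relations \cref{eq:HomogeneityTrajectory} and \cref{eq:HomogeneityStageCostsInterplay} now has to be carried through all estimates. First I would reduce the claim to a sublevel set of $N(\cdot)^d$: with $c_2 := \max_{x \in K} N(x)^d < \infty$, the set $\widehat K := \{x \in \R^n : N(x)^d \le c_2\}$ is a compact neighbourhood of the origin containing $K$, and a growth bound valid on $\widehat K$ restricts to one on $K$; hence it suffices to verify \cref{eq:GrowthConditionInequalityHomogeneousSystems} on $\widehat K$. I then fix some $c_1 \in (0,c_2)$ and work with the compact annulus $\mathcal N_{c_1,c_2} := \{x : c_1 \le N(x)^d \le c_2\}$, to which every initial value will eventually be reduced.

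On $\mathcal N_{c_1,c_2}$ I would reuse, verbatim, the covering step from the proof of \cref{thm:CostControllability}, which does not use homogeneity: by \cref{def:AsymptoticStabilizability}, continuity of trajectories in the initial datum, and compactness one extracts a fixed finite cover $\mathcal N_{c_1,c_2} \subseteq \bigcup_{i=1}^k \mathcal B_{\delta^i}(x^i)$, controls $u^i$, times $T^i$, and a fixed $\alpha \in (0,1)$ such that $u^i$ steers every $\bar x \in \mathcal B_{\delta^i}(x^i) \cap \mathcal N_{c_1,c_2}$ into $\operatorname{int}(\Lambda_\alpha \mathcal N_{c_1,c_2})$ within time $T^i$, with a uniform bound $M < \infty$ on $\int_0^{T^i} \ell(x(s;\bar x,u^i),u^i(s))\,\mathrm ds$. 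Next I would iterate the contraction: once the trajectory sits at a state $z_k \in \Lambda_{\alpha^k}\mathcal N_{c_1,c_2}$, I set $w_k := \Lambda_{\alpha^{-k}} z_k \in \mathcal N_{c_1,c_2}$, pick an index $i_k$ with $w_k \in \mathcal B_{\delta^{i_k}}(x^{i_k})$, and feed $z_k$ the rescaled control $s \mapsto \Delta_{\alpha^k} u^{i_k}(\alpha^{k\tau} s)$. Invoking \cref{eq:HomogeneityTrajectory}, \cref{eq:HomogeneityStageCostsInterplay} together with the substitution $\sigma = \alpha^{k\tau}s$ in the cost integral, one obtains that the $k$-th step has physical duration $\alpha^{-k\tau}T^{i_k} = \alpha^{k|\tau|}T^{i_k}$, ends inside $\Lambda_{\alpha^{k+1}}\mathcal N_{c_1,c_2}$, and costs at most $\alpha^{k(d-\tau)}M = \alpha^{k(d+|\tau|)}M$.

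The place where $\tau<0$ enters decisively is now the following: since $\alpha \in (0,1)$ and $|\tau|>0$, \emph{both} the step durations $\alpha^{k|\tau|}$ and the step costs $\alpha^{k(d+|\tau|)}$ form summable geometric series. Hence the concatenated control drives all of $\mathcal N_{c_1,c_2}$ to the origin in the \emph{finite} total time $T_{\mathrm{tot}} := (1-\alpha^{|\tau|})^{-1}\max_i T^i$ while accumulating a cost of at most $\bar M := (1-\alpha^{d+|\tau|})^{-1}M$, and one keeps $u \equiv 0$ at the origin afterwards; the only thing to check is that this countable concatenation lies in $\mathcal L^\infty_{\operatorname{loc}}$, which holds because its amplitude on step $k$ is scaled by $\Delta_{\alpha^k} \to 0$ and is therefore bounded on $[0,T_{\mathrm{tot}}]$. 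Consequently $V_t(x^0) \le \bar M$ for all $t \ge 0$ and all $x^0 \in \mathcal N_{c_1,c_2}$, and dividing by $N(x^0)^d \ge c_1$ yields the \emph{constant} growth bound $\bar M/c_1$ on the annulus. In contrast with the case $\tau = 0$, where the step durations do not shrink and the growth function grows (but stays bounded) with $t$, the time dilation attached to a negative degree here produces finite-time convergence.

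It then remains to scale down to $\widehat K$. Any $\tilde x^0 \in \widehat K$ lying outside $\mathcal N_{c_1,c_2}$ and different from $0$ satisfies $N(\tilde x^0)^d < c_1$, so there is a scaling factor $c > 1$ with $y^0 := \Lambda_c \tilde x^0 \in \mathcal N_{c_1,c_2}$. Applying the annulus bound to $y^0$ and transporting the corresponding control back through \cref{eq:HomogeneityTrajectory} and \cref{eq:HomogeneityStageCostsInterplay} — once more with the change of variables in the cost integral, and using $N(y^0)^d = c^d N(\tilde x^0)^d$ — I expect the estimate $V_t(\tilde x^0) \le c^{\tau-d}\bar M \le c^{\tau}(\bar M/c_1)N(\tilde x^0)^d \le (\bar M/c_1)N(\tilde x^0)^d$, where the last inequality is exactly where $c>1$ and $\tau<0$, hence $c^{\tau}<1$, are used (the bound is trivial for $\tilde x^0 = 0$). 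This establishes \cref{eq:GrowthCondition}, i.e.\ \cref{ass:GrowthCondition_nonglobal}, on $\widehat K$ — and thus on every compact set containing $0$ in its interior — with $B_K \equiv \bar M/c_1$; since \cref{eq:StageCostsCondition} holds for the stage cost \cref{eq:StageCosts}, the asserted semiglobal asymptotic stability then follows from \cref{thm:AsymptoticStabilityMPC_nonglobal}. The step I expect to be the main obstacle is precisely the bookkeeping of the time rescaling throughout the iteration: one must confirm that the geometric series of step durations converges — which breaks down for $\tau > 0$, consistently with \cref{Example:OneDimensional} — and that the resulting infinite concatenation of rescaled controls is an admissible control up to $T_{\mathrm{tot}}$.
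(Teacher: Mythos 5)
Your proposal is correct and follows essentially the same route as the paper's proof: reduction to a sublevel set of $N(\cdot)^d$, the compactness/covering construction taken verbatim from the proof of \cref{thm:CostControllability}, and the homogeneity-based contraction in which the $\alpha^{\tau}$ time rescaling is carried through the cost integrals, with small initial values handled by scaling up into the annulus via $c>1$ and using $c^{\tau}<1$. The only difference is bookkeeping: you sum the geometric series of step durations and step costs into a constant growth bound (exploiting the finite-time convergence produced by $\tau<0$), whereas the paper assembles the time-dependent bound $B(t)=\widetilde{B}(\alpha^{\tau}t)$ and iterates the contraction over the scaled annuli — the mathematical content is the same.
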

\begin{proof}
	Since the set~$K$ is compact there exists $c_2 \in (0,\infty)$ such that
	\begin{equation}\nonumber
		K \subseteq \{x^0 \in \mathbb{R}^n \mid N(x^0)^d \leq c_2\}
	\end{equation}		
	holds. We show that inequality~\cref{eq:GrowthConditionInequalityHomogeneousSystems} holds for all $x^0$ contained in this sublevel set. To this end, we start analogously to the verification of \cref{thm:CostControllability} with $c_1 \in (0,c_2)$ but define the growth function $B: \mathbb{R}_{\geq 0} \rightarrow \mathbb{R}_{\geq 0}$ differently. Let $\widetilde{B}$ denote the growth function defined analogously as in the proof of \cref{thm:CostControllability} and $\alpha$, $\alpha \in (0,1)$, be an arbitrarily chosen but fixed contraction parameter. Then, the adapted growth function~$B$ is given by
	\begin{equation}
		B(t) := \widetilde{B}(\alpha^{\tau} t) \qquad\text{for all $t \geq 0$.}
	\end{equation}
	Note that $\alpha^{\tau} > 1$ holds since $\alpha \in (0,1)$ and $\tau < 0$. Since $\widetilde{B}$ is monotonically increasing, $B(t) \geq \widetilde{B}(t)$ holds for all $t \geq 0$.\footnote{This preliminary step is important to %extend the validity of the growth bound to arbitrary initial values. Proceeding this way allows us to
	uniformly (independent of the initial value) reparametrize the time argument in the former growth bound by the factor~$\alpha^{\tau}$} %, which is independent of a particular initial condition.}
	
	Next, we show that inequality~\cref{eq:GrowthConditionInequalityHomogeneousSystems} holds for all $t \geq 0$ for $x^0 \in \mathcal{N}_{c_1,c_2}$. %
	We choose $x^1,\ldots,x^k$ with corresponding $u^i$, $T^i$ and $\delta^i$, $i=1,\ldots,k$, as in the proof of \cref{thm:CostControllability}. %
	For given initial state $x^0 \in \mathcal{N}_{c_1,c_2}$ %
	there exists an index~$q \in \{1,2,\ldots,k\}$ such that $\tilde{x}^0 := x(T^q;x^0,u^q) \in \alpha \mathcal{N}_{c_1,c_2}$ holds. %
	In addition, there exits an index $j \in \{1,2,\ldots,k\}$ %
	such that $\Lambda_{\alpha^{-1}} \tilde{x}^0 \in \mathcal{B}_{\delta^j}(x^j)$ holds and the analogon of identity~\cref{eq:HomogeneityIdentity} for $\tau < 0$ is
	\begin{align}
		\ell(x(s;\tilde{x}^0,\Delta_{c^{-1}}u^j(c^{-\tau}\cdot)),\Delta_{c^{-1}}u^j(c^{-\tau}s)) %
		& \stackrel{\cref{eq:HomogeneityStageCostsInterplay}}{=} c^{-d} \ell(x(c^{-\tau}s;\Lambda_c \tilde{x}^0,u^j),u^j(c^{-\tau}s)). \nonumber
	\end{align}
	Next, we use the scaling factor $c := \alpha^{-1} > 1$ for $\tilde{x}^0$. %
	Note that equation~\cref{eq:EllStar} yields $\ell^\star(\tilde{x}^0) = \alpha^d \ell^\star(\Lambda_{\alpha^{-1}} \tilde{x}^0)$. %
	Overall, for $t \leq \alpha^{-\tau} T^j$, we get 	
	\begin{align}
		\frac {V_t(\tilde{x}^0)}{\ell^\star(\tilde{x}^0)} & = \alpha^{-d}\, \frac {V_{t}(\tilde{x}^0)}{\ell^\star(\Lambda_{\alpha^{-1}} \tilde{x}^0)} \nonumber \\
		& \leq \frac {\alpha^{-d}}{\ell^\star(\Lambda_{\alpha^{-1}} \tilde{x}^0)}\, %
		\int_0^{t} \underbrace{\ell(x(s;\tilde{x}^0,\Delta_{\alpha}u^j(\alpha^{\tau}\cdot)),\Delta_{\alpha}u^j(\alpha^{\tau}s))}_{= \ell(x(\alpha^{\tau}s;\Lambda_{\alpha^{-1}} \tilde{x}^0,u^j),u^j(\alpha^{\tau}s))}\,\mathrm{d}s \nonumber \\
		& = \alpha^{-d} \alpha^{- \tau}\, \frac { \int_0^{\alpha^{\tau}t} \ell(x(\tilde{s};\Lambda_{\alpha^{-1}} \tilde{x}^0,u^j), u^j(\tilde{s}))\,\mathrm{d}\tilde{s}}{\ell^\star(\Lambda_{\alpha^{-1}} \tilde{x}^0)} \nonumber \\
		& \leq \alpha^{-d} \alpha^{-\tau}\, B(\min \{t,\alpha^{-\tau} T^\star\}) \leq \alpha^{-d} B(\min \{t,\alpha^{-\tau} T^\star\}) \nonumber
	\end{align}
	where we have used that $\alpha^{\tau} > 1$ and $\alpha^{-\tau} < 1$ hold. Furthermore, for a scaling factor $c > 1$ (e.g.~$c = \alpha^{-1}$), we get
	\begin{align}
		x(c^{\tau} T^j;\tilde{x}^0,\Delta_{c^{-1}} u^j(c^{-\tau} \cdot)) & = x(c^{\tau} T^j;\Lambda_{c^{-1}} \Lambda_c \tilde{x}^0,\Delta_{c^{-1}} u^j(c^{-\tau} \cdot)) \nonumber \\
		& \stackrel{\cref{eq:HomogeneityTrajectory}}{=} \Lambda_{c^{-1}} \underbrace{x(T^j;\Lambda_c \tilde{x}^0,u^j)}_{\in \alpha \mathcal{N}_{c_1,c_2}} \nonumber
	\end{align}		
	where $c^{\tau} < 1$ and $c^{-1} < 1$.	This shows that a contraction is attained already at time $\alpha^{-\tau} T^j$ ($c = \alpha^{-1}$). %
	For general initial value, the index $j$ has to be suitably chosen. %
	Hence, an iterative application of this line of reasoning shows the claimed inequality not only for $x^0 \in \alpha^i \mathcal{N}_{c_1,c_2}$, $i \in \mathbb{N} \cup \{0\}$, but indeed for all $x^0 \in K$.
	
	Semiglobal asymptotic stability then follows from \cref{thm:AsymptoticStabilityMPC_nonglobal}.
\end{proof}

\begin{remark} \label{rem:conditions}
	As already mentioned after the proof of \cref{thm:AsymptoticStabilityMPC_nonglobal}, %
	\cref{ass:GrowthCondition_nonglobal} is a sufficient condition for stability that is particularly suited for the class of homogeneous problems we investigated in this section. %
	However, there are other types of conditions that may be advantageous for other settings. %
	In this remark we discuss two of them that apply when \cref{ass:GrowthCondition_nonglobal} is violated.

	(i) In discrete time, \cite[Theorem~6.37]{GrunPann17} states that if there is a function $\rho\in\mathcal{K}_\infty$ such that the inequality
	\begin{equation} 
		V_t(x) \le \rho(\ell^\star(x)) \label{eq:cond_alt1}
	\end{equation}
	holds for all $(t,x)\in\R_{\ge 0}\times \R^n$, then the MPC closed loop is semiglobally practically asymptotically stable. %
	If this result were also available in continuous time, then we expect that it provides a way to handle homogeneous systems and running costs with positive degree. %
	In fact, for such systems we would even expect $\rho$ to be bounded from above by an affinely linear function, %
	due to which we conjecture that global practical asymptotic stability can be concluded. %
	Unfortunately, the derivation of these results in continuous time is a nontrivial task and beyond the scope of this paper. It will thus be addressed in future research.

	(ii) Both \cref{ass:GrowthCondition_nonglobal} and inequality~\cref{eq:cond_alt1} require that $\sup_{t\ge 0} V_t(x)<\infty$ for all $x\in\R^n$. %
	However, stability of the closed loop may also hold if $V_t(x)$ grows unboundedly for $t$ tending to infinity. %
	A sufficient condition for asymptotic stability that applies in this case is the following: %
	we assume the existence of a continuous function $C:\R_{\ge 0}\times\R^n\to[0,1]$ that is nonincreasing in $t$ with $C(t,x) < 1$ for all $x\in\R^n$ and $t>0$, %
	such that 
	\begin{equation} 
		V_t(x) \le t C(t,x)\ell^\star(x) \label{eq:cond_alt2}
	\end{equation}
	holds for all $(t,x)\in\R_{\ge 0}\times \R^n$. One easily checks that this condition is satisfied, e.g., for the 1d example 
	\[ 
		\dot x(t) = -|x(t)|(x(t)+u(t)), \qquad \ell(x,u) = |x|+|u|.
	\]
	Abbreviating the optimal trajectory by $x^\star(t) := x(t;x^0,u^\star)$, condition~\cref{eq:cond_alt2} implies 
	\begin{equation} 
		\ell^\star(x^\star(T)) = \inf_{t\in[0,T]}\ell^\star(x^\star(t)). \label{eq:ellinf}
	\end{equation}
	To see this, assume that \cref{eq:ellinf} does not hold. %
	Then there is $\eps>0$ and a $\tau\in[0,T)$ with $\ell^\star(x^\star(\tau)) \le \ell^\star(x^\star(T))-\eps$. %
	Chosing $\tau$ maximal with this property, it follows that $\ell^\star(x^\star(t))\ge \ell^\star(x^\star(T))-\eps \ge \ell^\star(x^\star(\tau))$ for all $t\in[\tau,T]$. %
	By the optimality principle we obtain $V_{T-\tau}(x^\star(\tau)) = \int_{\tau}^T\ell(x^\star(t),u^\star(t))dt \ge (T-\tau) \ell^\star(x^\star(\tau))$, %
	which contradicts \cref{eq:cond_alt2} and thus proves~\cref{eq:ellinf}.

	Now consider the value $J_\delta(x) := \int_0^\delta \ell(x^\star(t),u^\star(t))\, \mathrm{d}t$. %
	Since $\ell \ge 0$ holds, there exists $\tau^\star\in[0,\delta]$ with $\ell^\star(x^\star(\tau^\star)) \le J_\delta(x)/\delta$. %
	Using~\cref{eq:cond_alt2} and~\cref{eq:ellinf}, this implies
	$V_\delta(x^\star(T)) \le C(\delta,x^\star(T))\delta \ell^\star(x^\star(T)) \le C(\delta,x^\star(T)) J_\delta(x)$. From this we can conclude
	\begin{eqnarray*} 
		V_T(x^\star(\delta)) & \le & V_{T-\delta}(x^\star(\delta)) + V_\delta(x^\star(T))\\
		& = & V_T(x) - J_\delta(x) + V_\delta(x^\star(T))\\
		& \le & V_T(x) - (1-C(\delta,x^\star(T)))J_\delta(x) \; < \; V_T(x) 
	\end{eqnarray*}
	for $x\ne 0$. Thus, $V_T$ is a Lyapunov function for the closed loop and asymptotic stability of the closed loop follows.
\end{remark}

\section{Systems with globally asymptotically null controllable homogeneous approximation}\label{sec:HomogeneousApproximation}

In this section, we consider systems with homogeneous approximation at the origin in the following sense \cite{AndrPral08}.
\begin{definition}\label{def:homapp}
	Consider two controlled vector fields $f,h:\R^n\times\R^m\to\R^n$, where $h$ is homogeneous with parameters $r_i>0$, $s_j>0$ and $\tau\in(-\min_i r_i,\infty)$. %
	Then $h$ is called a homogeneous approximation of $f$ near the origin, if the relation
\[ f(x,u) = h(x,u) + R(x,u) \]
holds with
\begin{equation}\label{InequalityHomogeneousApproximation}
	| R_i(\Lambda_\alpha x, \Delta_\alpha u)| \le M \alpha^{r_i+\tau+\eta}
\end{equation}
for all $x,u$ with $\|x\|\le \rho$, $\|u\|\le \rho$ and all $\alpha\in(0,1]$, where $\rho, M, \eta$ are positive constants.
\end{definition}

In case the approximation~$h$ is globally asymptotically null controllable, %
we will establish cost controllability for the system governed by~$f$ %
if the stage cost~\cref{eq:StageCosts} are employed in the sense of \cref{ass:GrowthCondition_nonglobal}, where $K$ is a neighborhood of the origin. %
As a consequence, by means of \cref{thm:AsymptoticStabilityMPC_nonglobal} we obtain local asymptotic stability of the origin for the MPC closed loop. %

Our main tool for deriving these results are homogeneous control Lyapunov functions, whose existence was established in~\cite{Grun00}. %
We first show preliminary results in \cref{sec:robdini}, i.e.\ %Lipschitz continuity and
in particular robustness of homogeneous Lyapunov functions. %, respectively.
Then, we generalize \cref{thm:CostControllability} ($\tau = 0$) and its extension \cref{TheoremCostControllabilityHomogeneousNegative} ($\tau < 0$) before our findings are illustrated by means of the nonholonomic robot example, a variant of Brockett's famous nonholonomic integrator.

%\subsection{Lipschitz continuity of homogeneous Lyapunov functions}\label{SubsectionHomogeneousApproxLipschitzContinuity}
\subsection{Properties of Homogeneous Lyapunov Functions}\label{sec:robdini}

In this section we assume that approximation~$h$ is globally asymptotically null controllable.
Then \cite[Theorem 3.5]{Grun00} establishes the existence of a homogeneous control Lyapunov function $V:\R^n\to\R_{\ge 0}$, which
\begin{enumerate}
	\item [1)] is continuous on $\R^n$ and Lipschitz on $\R^n \setminus \{0\}$,
	\item [2)] obeys the equality $V(\Lambda_\alpha x) = \alpha^{2k} V(x)$, and
	\item [3)] satisfies the inequality $\min_{v\in \mathrm{co}\,h(x,W_x)} DV(x;v) \le -\mu N(x)^\tau V(x)$,
\end{enumerate}
where $k := \min_{i=1,\ldots,n} r_i$, $\mu>0$, $W_x =\Delta_{N(x)}U$ for a suitable compact set $U\subset\R^m$,
and $DV(x;v)$ denotes the lower Dini derivative
\[
	DV(x;v) := \liminf_{t\searrow 0,v'\to v}\frac{1}{t}\left( V(x+tv')-V(x)\right).
\]
Next, we show an auxiliary result. Namely, that a suitably scaled version of the homogeneous Lyapunov function~$V$ is locally Lipschitz.
\begin{lemma}\label{LemmaLipschitzContinuity}
	Let $p$ be defined by $\max_{i=1,\ldots,n} r_i$ and let the homogeneous Lyapunov function $V: \mathbb{R}^n \rightarrow \mathbb{R}_{\geq 0}$ satisfy Properties 1) and 2). %
	Then, for each compact set $K \subset \R^n$, there is $L > 0$ %
	such that $\hat{V} := V^{p/(2k)}$ is Lipschitz continuous on $K$ with Lipschitz constant $L$.
\end{lemma}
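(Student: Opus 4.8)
The plan is to exploit that, after raising $V$ to the power $p/(2k)$, the result $\hat V$ becomes homogeneous of degree $p = \max_i r_i$ instead of of degree $2k = 2\min_i r_i$. Under a dilation of degree $p$ \emph{all} weights satisfy $r_i \le p$, and this is exactly what prevents the (local) Lipschitz constant of $\hat V$ from blowing up as one contracts toward the origin --~the phenomenon that would otherwise destroy Lipschitz continuity of $V$ itself at $0$. Concretely, I would first record the elementary properties of $\hat V := V^{p/(2k)}$: since $V \ge 0$ is continuous and $t\mapsto t^{p/(2k)}$ is continuous on $[0,\infty)$, $\hat V$ is continuous on $\R^n$ with $\hat V(0) = 0$; since $V$ is positive (a Lyapunov function being positive on $\R^n\setminus\{0\}$) and locally Lipschitz on $\R^n\setminus\{0\}$ while $t\mapsto t^{p/(2k)}$ is locally Lipschitz on $(0,\infty)$, $\hat V$ is locally Lipschitz on $\R^n\setminus\{0\}$; and Property~2) gives $\hat V(\Lambda_\alpha x) = \alpha^{p}\hat V(x)$ for all $\alpha\ge 0$. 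Since every compact $K$ lies in some closed ball $\bar B_\rho := \{x : \|x\|\le\rho\}$ and a Lipschitz bound on a larger set restricts to $K$, it suffices to produce, for each $\rho>0$, a constant $L(\rho)$ such that $\hat V$ is $L(\rho)$-Lipschitz on $\bar B_\rho$.

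The core is a scaling estimate for the gradient. By Rademacher's theorem $\hat V$ is differentiable almost everywhere on $\R^n\setminus\{0\}$; moreover, writing $\hat V = \alpha^{p}\,(\hat V\circ\Lambda_{1/\alpha})$ and using that $\Lambda_{1/\alpha}$ is a linear isomorphism shows that $\hat V$ is differentiable at $z$ if and only if it is differentiable at $\Lambda_{1/\alpha}z$, and that at such points $\partial_i\hat V(\Lambda_\alpha z) = \alpha^{\,p-r_i}\partial_i\hat V(z)$, where $p - r_i \ge 0$ because $r_i \le p$. Now fix $z \in \bar B_\rho\setminus\{0\}$ at which $\hat V$ is differentiable, set $\alpha := N(z) \in (0, c_\rho]$ with $c_\rho := \max_{\bar B_\rho} N < \infty$, and $w := \Lambda_{1/\alpha}z$, so that $N(w) = 1$ by~\cref{eq:HomogeneityDilatedNormInterplay} and $\hat V$ is differentiable at $w$. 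Then $|\partial_i\hat V(z)| = \alpha^{\,p-r_i}|\partial_i\hat V(w)| \le \max\{1, c_\rho^{\,p-k}\}\,|\partial_i\hat V(w)|$, whence $\|\nabla\hat V(z)\| \le \max\{1, c_\rho^{\,p-k}\}\,\|\nabla\hat V(w)\|$. Covering the compact set $\mathcal S := \{x : N(x) = 1\} \subset \R^n\setminus\{0\}$ by finitely many balls on which $\hat V$ is Lipschitz bounds $\|\nabla\hat V\|$, wherever it exists on a neighbourhood of $\mathcal S$, by some $\lambda < \infty$; as $w \in \mathcal S$ this gives $\|\nabla\hat V(z)\| \le L(\rho) := \lambda\,\max\{1, c_\rho^{\,p-k}\}$ for almost every $z \in \bar B_\rho\setminus\{0\}$.

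It then remains to pass from this essential bound on $\|\nabla\hat V\|$ to a genuine Lipschitz estimate on all of $\bar B_\rho$, including the origin. If the segment joining $x,y$ avoids $0$, then on a convex neighbourhood of the segment contained in $\R^n\setminus\{0\}$ (and in a slightly larger punctured ball, on which the same bound holds since $\rho$ was arbitrary) $\hat V$ is Lipschitz with constant at most the essential supremum of $\|\nabla\hat V\|$ there, hence $|\hat V(x) - \hat V(y)| \le L\|x - y\|$. If the segment passes through $0$, apply this along $[x/n, x]$ and $[y/n, y]$, let $n\to\infty$, and use continuity of $\hat V$ at $0$ together with $\|x\| + \|y\| = \|x-y\|$. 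This yields the Lipschitz constant on $\bar B_\rho$, hence on $K$.

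The step I expect to be the main obstacle --~and the reason the exponent $p/(2k)$ is precisely the right choice~-- is the control of $\hat V$ near the origin: one must ensure that the renormalisation $z\mapsto\bigl(N(z),\,\Lambda_{1/N(z)}z\bigr)$ used in the scaling estimate is compatible with the almost-everywhere differentiability coming from Rademacher's theorem (this works because the non-differentiability set of a homogeneous function is invariant under the dilations $\Lambda_\alpha$), and that the concluding measure-theoretic argument correctly absorbs the single bad point $x = 0$. Everything away from $0$ is a routine compactness argument; the homogeneity bookkeeping $p-r_i\ge 0$ is what makes the estimate near $0$ go through.
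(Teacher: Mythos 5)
Your proof is correct, but it takes a genuinely different route from the paper. The paper argues directly at the level of function values: it first establishes a uniform two-point estimate $|\hat V(\hat x)-\hat V(\hat y)|\le L\|\hat x-\hat y\|_1$ for $N(\hat x)=1$ and $N(\hat y)\le 1$ (splitting into the annulus $1/2\le N\le 1$, where plain Lipschitz continuity on $\R^n\setminus\{0\}$ applies, and the region $N(\hat y)\le 1/2$, where the two points are uniformly separated so a continuity bound suffices), and then rescales an arbitrary pair $x,y$ by $\Lambda_{N(x)}^{-1}$, using $\hat V(\Lambda_\alpha\cdot)=\alpha^p\hat V(\cdot)$ against $\|\Lambda_\alpha^{-1}x-\Lambda_\alpha^{-1}y\|_1\le\alpha^{-p}\|x-y\|_1$ for $\alpha\le 1$ — no differentiability is ever invoked. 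You instead pass through Rademacher's theorem, derive the pointwise scaling $\partial_i\hat V(\Lambda_\alpha w)=\alpha^{p-r_i}\partial_i\hat V(w)$ with $p-r_i\ge 0$, obtain an essential bound on $\|\nabla\hat V\|$ on punctured balls, and then recover the Lipschitz estimate via convex neighbourhoods of segments plus a limiting argument for segments through the origin. Both arguments hinge on exactly the same homogeneity bookkeeping (renormalizing $V$ to degree $p=\max_i r_i$ so that the worst-scaling coordinate direction is matched), so your proof confirms the mechanism of the lemma; what it buys is an explicit almost-everywhere gradient bound, at the price of measure-theoretic machinery (Rademacher, the ess-sup-to-Lipschitz step on convex sets, the invariance of the non-differentiability set under dilations) and the separate treatment of the origin, all of which the paper's elementary two-point dilation argument avoids. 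Note also that both your proof and the paper's rely on the same implicit facts that $V$ is positive on $\R^n\setminus\{0\}$ and hence that the power $V^{p/(2k)}$ preserves (local) Lipschitz continuity there; you state this explicitly, the paper subsumes it in ``Property 1) is preserved''.
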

\begin{proof} A straightforward computation yields that $\hat{V}$ satisfies the (in)equalities
	\begin{equation}
		\hat{V}(\Lambda_\alpha x) = \alpha^p \hat{V}(x) \label{eq:Vhom}
	\end{equation}
	and
	\begin{equation}
		\min_{v \in \mathrm{co}\,h(x,W_x)} D\hat{V}(x;v) \le -\sigma N(x)^\tau \hat{V}(x) \label{eq:Vdecay}
	\end{equation}
	with $\sigma := p \mu/(2k) > 0$. Moreover, Property 1) is preserved. Based on this observation, we can prove the improved Lipschitz property of $\hat{V}$.

	Throughout the proof we use the $1$-norm $\|\cdot\|=\|\cdot\|_1$. %
	Because of the homogeneity, it suffices to show that $\hat{V}$ is Lipschitz on the compact set $K := \{x \in \mathbb{R}^n\,|\, N(x) \leq 1\}$. %
	To this end, first observe that the Lipschitz continuity on $\R^n\setminus\{0\}$ %
	implies the existence of a Lipschitz constant $L_1>0$ on the set $K_1 := \{x\in\R^n\,|\, 1/2 \le N(x) \le 1\}$. %
	Moreover, by continuity of $\hat{V}$ there exists a constant $C_1>0$ such that for all $\hat x\in\R^n$ with $N(\hat x)=1$ and all $\hat y\in \R^n$ with $N(\hat y)\le 1/2$ %
	the inequality $| \hat{V}(\hat x) - \hat{V}(\hat y) | \le C_1$ holds. %
	Together with the fact that there is a constant $C_2>0$ with $\|\hat x-\hat y\|\ge C_2$ for all such $\hat x$ and $\hat y$, for $L_2=C_1/C_2$ %
	we obtain
	%\begin{equation}\nonumber	
		$| \hat{V}(\hat x) - \hat{V}(\hat y) | \leq C_1 \leq C_1/C_2 \| \hat{x} - \hat{y} \| = L_2 \| \hat{x} - \hat{y} \|$. %
	%\end{equation}
	This yields that, for all $\hat{x} \in \R^n$ with $N(\hat{x}) = 1$ and all $\hat{y} \in K$, the inequality
	\begin{equation}
		| \hat{V}(\hat x) - \hat{V}(\hat y) | \le L \|\hat x-\hat y\| \label{eq:lip1}
	\end{equation}
	holds with $L:=\max\{L_1,L_2\}$.

	Now consider two arbitrary points $x, y\in K$. %
	Without loss of generality assume $N(x)\ge N(y)$. %
	Let $\alpha:=N(x)\le 1$ and define $\hat x := \Lambda_\alpha^{-1} x$ and $\hat y := \Lambda_\alpha^{-1} y$. %
	Then we obtain
	\[
		\|\hat x - \hat y\| = \sum_{i=1}^n |\hat x_i - \hat y_i| %
		= \sum_{i=1}^n \alpha^{-r_i} |x_i - y_i| \leq \alpha^{-p} \sum_{i=1}^n |x_i - y_i| = \alpha^{-p}\|x-y\|.
	\]
	Observing that $N(\hat x)=1$ and $N(\hat y)\le 1$, we can apply~\cref{eq:lip1} in order to conclude
	\begin{eqnarray*}
		| \hat{V}(x) - \hat{V}(y) | & = & | \hat{V}(\Lambda_\alpha \hat x) - \hat{V}(\Lambda_\alpha \hat y) | = \alpha^p | \hat{V}(\hat x) - \hat{V}(\hat y)|\\
		& \leq & \alpha^p L \|\hat x - \hat y\| \leq \alpha^p L \alpha^{-p} \| x - y\| = L \| x - y\|,
	\end{eqnarray*}
	which shows the claim.
\end{proof}

Based on the Lipschitz continuity of the suitably scaled homogeneous Lyapunov function, we can now derive a robustness statement for the Dini derivative of the Lyapunov function along the vector field $f$ that is homogeneously approximated by~$h$. %
\begin{proposition}\label{prop:Vdecayapprox}
	Consider a vector field~$f$ with homogeneous approximation~$h$ according to \cref{def:homapp} and %
	a homogeneous control Lyapunov function $V: \R^n \to \R_{\ge 0}$ satisfying Properties 2) and 3). %
	Then, for each $\delta \in (0,1)$, there exists $\varepsilon > 0$ such that the inequality
	\begin{equation}
		\min_{v\in \mathrm{co}\,f(x,W_x)} DV(x;v) \le - \delta \sigma N(x)^\tau V(x). \label{eq:Vdecayapprox}
	\end{equation}
	holds for all $x \in \R^n$ with $N(x) \le \varepsilon$.
\end{proposition}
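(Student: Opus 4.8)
The plan is to transfer the decay estimate \cref{eq:Vdecay} from the homogeneous vector field $h$ to the perturbed field $f = h + R$ by a dilation argument that reduces everything to the compact ``unit sphere'' $S := \{x \in \R^n : N(x) = 1\}$. Throughout I would work with the scaled Lyapunov function $\hat V := V^{p/(2k)}$ of \cref{LemmaLipschitzContinuity}: it is homogeneous in the sense of \cref{eq:Vhom}, satisfies the decay \cref{eq:Vdecay} with rate $\sigma$, and --~this is precisely why $\hat V$ is used instead of $V$~-- is Lipschitz with one fixed constant $L$ on a compact neighbourhood $\Omega \supset S$, say $\Omega := \{x \in \R^n : 1/2 \le N(x) \le 2\}$. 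Since $\hat V$ is an increasing power of $V$, $D\hat V(x;v)$ and $DV(x;v)$ are proportional via a positive factor (chain rule for lower Dini derivatives), so it suffices to establish the counterpart of \cref{eq:Vdecayapprox} for $\hat V$.

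I would first record two elementary scaling identities for the Dini derivative. Substituting $w' = \Lambda_\alpha^{-1} v'$ in the defining $\liminf$ and using \cref{eq:Vhom} gives $D\hat V(\Lambda_\alpha \bar x; v) = \alpha^p D\hat V(\bar x; \Lambda_\alpha^{-1} v)$ for $\alpha > 0$, while directly from the definition $D\hat V(\bar x; c v) = c\, D\hat V(\bar x; v)$ for every $c > 0$. Next I would handle the remainder: for $\bar x \in S$, $\bar u \in U$ (the compact set from \cref{eq:Vdecay}) and small $\alpha$, homogeneity of $h$ (cf.\ \cref{NotationHomogeneityCondition}) together with \cref{InequalityHomogeneousApproximation} yields $\Lambda_\alpha^{-1} f(\Lambda_\alpha \bar x, \Delta_\alpha \bar u) = \alpha^\tau\bigl(h(\bar x,\bar u) + \alpha^\eta g_\alpha(\bar x,\bar u)\bigr)$ with $g_\alpha$ bounded uniformly in $(\bar x, \bar u, \alpha)$ --~the $i$-th component of $\Lambda_\alpha^{-1} R(\Lambda_\alpha \bar x, \Delta_\alpha \bar u)$ equals $\alpha^{-r_i} R_i(\Lambda_\alpha \bar x, \Delta_\alpha \bar u) = O(\alpha^{\tau+\eta})$. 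Passing to convex combinations and using $W_x = \Delta_{N(x)} U$, this shows $\Lambda_\alpha^{-1}\, \mathrm{co}\, f(x, W_x) \subseteq \alpha^\tau\bigl(\mathrm{co}\, h(\bar x, W_{\bar x}) + \alpha^\eta \mathcal{G}\bigr)$ for $\bar x := \Lambda_{N(x)}^{-1} x \in S$, with $\mathcal{G}$ a fixed bounded set. One technical caveat: \cref{InequalityHomogeneousApproximation} holds only for $\|x\|, \|u\| \le \rho$, which may fail on $S$; this is circumvented by fixing $\beta \in (0,1]$ small --~uniformly over the compacta $S$ and $U$, which is possible since all $r_i, s_j > 0$~-- with $\|\Lambda_\beta \bar x\|, \|\Delta_\beta \bar u\| \le \rho$ and applying \cref{InequalityHomogeneousApproximation} after writing $\Lambda_\alpha = \Lambda_{\alpha/\beta} \Lambda_\beta$ for $\alpha \le \beta$, which only changes the constant in the bound on $g_\alpha$.

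To conclude, fix $\delta \in (0,1)$ and a point $x \ne 0$ with $\alpha := N(x)$ small, and put $\bar x := \Lambda_\alpha^{-1} x \in S$. Since $\mathrm{co}\, h(\bar x, W_{\bar x})$ is compact and $v \mapsto D\hat V(\bar x; v)$ is Lipschitz (as $\hat V$ is Lipschitz near $\bar x$), the minimum in \cref{eq:Vdecay} is attained at some $\bar v^\star \in \mathrm{co}\, h(\bar x, W_{\bar x})$ with $D\hat V(\bar x; \bar v^\star) \le -\sigma N(\bar x)^\tau \hat V(\bar x) = -\sigma \hat V(\bar x)$; I pick the element $v^\star \in \mathrm{co}\, f(x, W_x)$ assembled from the same convex weights and controls. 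The scaling identities give $D\hat V(x; v^\star) = \alpha^{p+\tau} D\hat V(\bar x; \bar v^\star + \alpha^\eta g^\star)$ with $\|g^\star\| \le G$ a uniform constant, and the uniform Lipschitz bound on $\Omega$ gives $D\hat V(\bar x; \bar v^\star + \alpha^\eta g^\star) \le -\sigma \hat V(\bar x) + LG\alpha^\eta$. As $\hat V$ is continuous and strictly positive on the compact set $S$, it is bounded below there by some $c_0 > 0$, so the right-hand side is $\le -\delta\sigma\hat V(\bar x)$ whenever $\alpha \le \varepsilon := \min\{\beta,\ ((1-\delta)\sigma c_0/(LG))^{1/\eta}\}$. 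Scaling back, $\min_{v \in \mathrm{co}\, f(x, W_x)} D\hat V(x; v) \le D\hat V(x; v^\star) \le -\delta\sigma\,\alpha^{p+\tau}\hat V(\bar x) = -\delta\sigma\, N(x)^\tau \hat V(x)$ for all $x$ with $N(x) \le \varepsilon$, which is \cref{eq:Vdecayapprox} for $\hat V$, and hence for $V$ via the chain rule.

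The step I expect to be the main obstacle is exactly this coupling of the remainder estimate with the reduction to $S$. The naive approach --~perturb the $h$-optimal direction at the point $x$ itself and bound the change of $\hat V$ by its Lipschitz constant~-- does not work in general: on $W_x$ one only has $\|R(x,\cdot)\|$ of order $N(x)^{\min_i r_i + \tau + \eta}$, whereas the decay rate is of order $N(x)^{\max_i r_i + \tau}$, so the perturbation need not be of lower order when $\min_i r_i + \eta < \max_i r_i$. The dilation $\Lambda_{N(x)}^{-1}$ is designed to reweight the components of $R$ so that each of them becomes $O(N(x)^{\tau+\eta})$ --~in effect a weighted Lipschitz estimate for $\hat V$~-- which makes the perturbation genuinely a factor $N(x)^\eta$ smaller than the decay term; arranging this bookkeeping uniformly in the base point $\bar x \in S$ is the heart of the proof.
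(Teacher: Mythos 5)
Your proof is correct and follows essentially the same route as the paper's: rescale to $\hat V = V^{p/(2k)}$ to exploit its uniform Lipschitz constant, pull back by the dilation $\Lambda_{N(x)}^{-1}$ so that the remainder becomes an additive perturbation of size $O(N(x)^{\tau+\eta})$ in the direction argument of the Dini derivative, absorb it into a $(1-\delta)$ fraction of the decay using the positive minimum of $\hat V$ on a compact dilated sphere, and scale back by homogeneity. The only cosmetic difference is that you normalize to the sphere $N(x)=1$ with an explicit $\beta$-rescaling to respect the validity region $\|x\|,\|u\|\le\rho$ of \cref{InequalityHomogeneousApproximation}, whereas the paper works directly on the sphere $N(x)=\nu$ with $\nu$ chosen small.
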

\begin{proof}
	At the beginning of the proof of \cref{LemmaLipschitzContinuity} %
	we made the observation that scaling of the homogeneous Lyapunov function does not change Properties 2) and 3). %
	Analogously, it can be seen that once inequality~\cref{eq:Vdecayapprox} is established for $\hat{V} := V^{p/(2k)}$, %
	a straightforward computation shows that it also holds (with $\delta\sigma$ replaced by $\delta\sigma q$) for $\hat{V}^q$, $q>0$,  and, thus, in particular for $V$. %
	Hence, it is sufficient to prove inequality~\cref{eq:Vdecayapprox} for $\hat{V}$.
	In what follows, we again write $V$ instead of $\hat{V}$ in order to simplify the notation within the remainder of the proof. %

	From the form of $f$ it follows that for all $\alpha\in(0,1]$ and each $v_h\in \mathrm{co}\,h(\Lambda_\alpha x,W_{\Lambda_\alpha x})$ %
	there exists $v\in \mathrm{co}\,f(\Lambda_\alpha x,W_{\Lambda_\alpha x})$ with $v=v_h + v_R(v_h,x,\alpha)$, where
	\begin{equation}
		\left|[v_R(v_h,x,\alpha)]_i\right| \le M \alpha^{r_i+\tau+\eta}, \label{eq:vR}
	\end{equation}
	provided $x$ is sufficiently close to $0$. We choose $\nu>0$ small enough such that \cref{eq:vR} holds whenever $N(x)\le \nu$. %
	inequality~\cref{eq:vR} implies that $v_R(v_h,x,\alpha)$ can be written as
	\[
		v_R(v_h,x,\alpha) = \alpha^{\tau+\eta}\Lambda_\alpha \hat v_R
	\]
	for a vector $\hat v_R = \hat v_R(v_h,x,\alpha)\in\R^n$ with $\left|[\hat v_R]_i\right| \le M$. %
	Any sequence $v'\to v$ can thus be written as $v' = v_h' + \alpha^{\tau+\eta} \Lambda_\alpha \hat v_R$ with $v_h'\to v_h$. %
	From the Lipschitz continuity of $V$ we obtain
	\begin{eqnarray*}
		|V(\Lambda_\alpha x+tv') - V(\Lambda_\alpha x+tv_h')| & \le & \alpha^p|V(x+t\Lambda_\alpha^{-1} v') - V(x+t\Lambda_\alpha^{-1} v_h')|\\
		& \le & \alpha^p tL \| \Lambda_\alpha^{-1} \alpha^{\tau+\eta}\Lambda_\alpha \hat v_R \| = \alpha^{p+\tau+\eta} tL \| \hat v_R \|.
	\end{eqnarray*}
	For the Dini-derivative this implies
	\[
		\min_{v\in \mathrm{co}\,f(\Lambda_\alpha x,W_{\Lambda_\alpha x})} DV(\Lambda_\alpha x;v) %
		\le \min_{v_h\in \mathrm{co}\,h(\Lambda_\alpha x,W_{\Lambda_\alpha x})} DV(\Lambda_\alpha x;v) + \alpha^{p+\tau+\eta} M_R.
	\]
	with $M_R$ being a bound on $\|\hat v_R\|$.

	Now let $m_V := \min \{ V(x) \, |\, N(x) = \nu\}$. Then \cref{eq:Vhom} implies
	\[
		\alpha^{p+\tau+\eta}M_R \le (1-\delta)\sigma\alpha^{p+\tau} V(x) \nu^\tau
	\]
	for all $x\in\R^n$ with $N(x)=\nu$, %
	whenever $\alpha \le \alpha_\delta := ((1-\delta)\sigma m_V\nu^\tau/M_R)^{1/\eta}$. %
	Given $x\in\R^n$ with $N(x)\le \nu$, setting $\alpha := N(x)/\nu\in(0,1]$ and $\hat x := \Lambda_\alpha^{-1}(x)$ %
	we obtain $N(\hat x) = \nu$. Hence, for $\eps = \alpha_\delta \nu$ and $N(x)\le \eps$ (implying $\alpha \le \alpha_\delta$) we obtain
	\begin{eqnarray*}
		\min_{v\in \mathrm{co}\,f(x,W_{x})} DV(x;v) & = & \min_{v\in \mathrm{co}\,f(\Lambda_\alpha \hat x,W_{\Lambda_\alpha \hat x})} DV(\Lambda_\alpha \hat x;v)\\
		& \le & \min_{v_h\in \mathrm{co}\,h(\Lambda_\alpha \hat x,W_{\Lambda_\alpha \hat x})} DV(\Lambda_\alpha x;v) + \alpha^{p+\tau+\eta} M_R\\
		& \le & - \sigma \underbrace{N(\Lambda_\alpha \hat x)^\tau}_{=\alpha^\tau \nu^\tau}\underbrace{V(\Lambda_\alpha \hat x)}_{=\alpha^p V(\hat x)} %
		+ \underbrace{\alpha^{p+\tau+\eta} M_R}_{\le (1-\delta)\sigma\alpha^{p+\tau} V(\hat x)\nu^\tau} \\
		& \le & - \delta\sigma \alpha^{p+\tau}\nu^\tau V(\hat x) = - \delta\sigma N(\Lambda_\alpha \hat x)^\tau V(\Lambda_\alpha \hat x) = - \delta\sigma N(x)^\tau V(x).
	\end{eqnarray*}
	This shows the claim.
\end{proof}

\subsection{Cost controllability via homogeneous control Lyapunov functions}

We now show that cost controllability can be concluded from the existence of a homogeneous control Lyapunov function for the homogeneous approximation. %
To this end, we use the observation made in the proof of \cite[Theorem 1]{SonS95} %
(based on a similar statement for differential inclusions from \cite{AubiCell84}), %
that inequality~\cref{eq:Vdecayapprox} implies the existence of a measurable control function $u : [0,t] \to \R^m$ for which the integral inequality
\begin{equation}
	V(x(t,x^0,u)) - V(x^0) \le \int_0^t - q \delta \sigma N(x(s;x^0,u))^\tau V(x(s,x^0,u))\,\mathrm{d}s \label{eq:intV}
\end{equation}
and the inclusion $u(s) \in W_{x(s,x^0,u)}$ hold for all almost all $s \geq 0$. %
While the construction in \cite{SonS95} is for global control Lyapunov functions, %
standard Lyapunov function arguments show that it remains valid if the control Lyapunov function is only defined in a neighbourhood of the origin. %
Of course, in this case \cref{eq:intV} will only hold for $x^0$ from a (possibly different) neighbourhood of the origin.

\begin{theorem}
	Consider a vector field $f$ with homogeneous approximation $h$ of degree $\tau \leq 0$ according to \cref{def:homapp}. %
	Assume that the homogeneous system with vector field $h$ is globally asymptotically null controllable to the origin. %
	Then there exists a neighborhood $\mathcal{N}$ of the origin such that \cref{eq:GrowthCondition} is satisfied with $K = \mathcal{N}$, %
	i.e.\ the cost controllability
	is satisfied for the vector field $f$ and $B_K(t)$ is uniformly bounded. %
	In particular, given $\delta>0$, for any sufficiently large $T>0$ the origin is locally asymptotically stable for the MPC closed loop.
\end{theorem}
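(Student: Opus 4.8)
The plan is to produce, for every initial value near the origin, an admissible control along which the homogeneous control Lyapunov function~$V$ of \cite[Theorem~3.5]{Grun00} decays so quickly that the accumulated stage cost stays proportional to $\ell^\star(x^0)=N(x^0)^d$. I would start by fixing $\delta\in(0,1)$ and letting $\varepsilon>0$ be the radius furnished by \cref{prop:Vdecayapprox}, so that $\min_{v\in\mathrm{co}\,f(x,W_x)}DV(x;v)\le-\delta\sigma N(x)^\tau V(x)$ holds for all $x$ with $N(x)\le\varepsilon$. By Property~2) and positive definiteness of $V$ there are constants $0<c_1\le c_2$ with $c_1 N(x)^{2k}\le V(x)\le c_2 N(x)^{2k}$ for all $x\in\R^n$ (evaluate $V$ on the compact set $\{N(x)=1\}$ and use homogeneity). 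I then set $\mathcal N:=\{\,x\in\R^n\mid V(x)\le c_1\varepsilon^{2k}\,\}$, which satisfies $\mathcal N\subseteq\{N(x)\le\varepsilon\}$; since the control selected in \cref{eq:intV} keeps $t\mapsto V(x(t;x^0,u))$ nonincreasing, $\mathcal N$ is forward invariant along that trajectory, so the latter never leaves the region where \cref{eq:Vdecayapprox}, and hence the selection inequality \cref{eq:intV}, is valid.

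Next I would bound the running cost along such a trajectory. The selected control obeys $u(s)\in W_{x(s)}=\Delta_{N(x(s))}U$ with $U\subset\R^m$ compact, hence $|u_j(s)|\le N(x(s))^{s_j}\max_{w\in U}|w_j|$ and therefore $\sum_j|u_j(s)|^{d/s_j}\le C' N(x(s))^d$ with $C':=\sum_j\max_{w\in U}|w_j|^{d/s_j}$. Together with the definition \cref{eq:StageCosts} of~$\ell$ this gives $\ell(x(s;x^0,u),u(s))\le(1+C')N(x(s;x^0,u))^d$, so that $V_t(x^0)\le(1+C')\int_0^t N(x(s;x^0,u))^d\,\mathrm ds$ for every horizon~$t$, and the whole task reduces to bounding $\int_0^t N(x(s;x^0,u))^d\,\mathrm ds$ by a $t$-independent multiple of $N(x^0)^d$.

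For that integral, set $w(s):=V(x(s;x^0,u))$. Combining \cref{eq:intV} with $c_1 N^{2k}\le V\le c_2 N^{2k}$ and $\tau\le 0$ yields (in integrated form) the differential inequality $\dot w\le -C\,w^{\theta}$ with $\theta:=1+\tau/(2k)\in(1/2,1]$ (recall $\tau\in(-k,0]$) and a constant $C>0$. Comparing with the scalar ODE $\dot z=-Cz^\theta$, $z(0)=w(0)$, one obtains exponential decay $z(s)=w(0)e^{-Cs}$ when $\tau=0$ and the finite-time profile $z(s)=(w(0)^{1-\theta}-(1-\theta)Cs)_+^{1/(1-\theta)}$ when $\tau<0$; in both cases an elementary integration gives $\int_0^\infty z(s)^{d/(2k)}\,\mathrm ds\le C''\,w(0)^{d/(2k)+\max\{0,1-\theta\}}$. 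Since $w(0)=V(x^0)\le c_2\varepsilon^{2k}$ on $\mathcal N$, the surplus factor $w(0)^{1-\theta}$ appearing for $\tau<0$ is absorbed into a constant, whence $\int_0^t N(x(s;x^0,u))^d\,\mathrm ds\le c_1^{-d/(2k)}\int_0^\infty w(s)^{d/(2k)}\,\mathrm ds\le\widetilde C\,N(x^0)^d$ for all $t\ge0$. Assembling the three steps yields $V_t(x^0)\le B\,N(x^0)^d=B\,\ell^\star(x^0)$ for all $t\ge0$ and all $x^0\in\mathcal N$, with $B:=(1+C')\widetilde C$; i.e.\ \cref{eq:GrowthCondition} (equivalently \cref{eq:GrowthConditionInequalityHomogeneousSystems}) holds with $K=\mathcal N$ and the constant, hence monotone and bounded, growth function $B_K\equiv B$. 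As \cref{eq:StageCostsCondition} is satisfied by \cref{eq:StageCosts}, \cref{thm:AsymptoticStabilityMPC_nonglobal} then delivers local asymptotic stability of the origin for the MPC closed loop for all sufficiently large~$T$.

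The step I expect to be the bottleneck is the integral estimate for $\tau<0$: there the comparison ODE produces a bound of order $w(0)^{d/(2k)+1-\theta}$ rather than of order $w(0)^{d/(2k)}=N(x^0)^d$, so global cost controllability genuinely fails (in line with \cref{Example:OneDimensional}), and one must exploit that $x^0$ ranges only over the bounded set $\mathcal N$ to turn the extra power into a harmless constant — which is precisely why the conclusion can only be local. A second, more routine point needing care is to justify that the selection construction underlying \cref{eq:intV}, stated globally in \cite{SonS95}, still applies when $V$ is a control Lyapunov function defined only near the origin, together with the forward-invariance argument that confines the trajectory to that region.
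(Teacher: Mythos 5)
Your proposal is correct and follows essentially the same route as the paper: the homogeneous control Lyapunov function from \cite{Grun00}, its robustness along $f$ via \cref{prop:Vdecayapprox}, the Sontag-type selection giving \cref{eq:intV}, the bound $\ell(x,u)\lesssim N(x)^d$ from $u(s)\in W_{x(s)}$, and the final appeal to \cref{thm:AsymptoticStabilityMPC_nonglobal}. The only deviation is cosmetic: you close the estimate by explicitly integrating the comparison ODE $\dot z=-Cz^\theta$ (absorbing the extra power $w(0)^{1-\theta}$ for $\tau<0$ by boundedness of $\mathcal N$), whereas the paper bounds $\int_0^t V$ directly from \cref{eq:intV} using $N(x(s))^\tau\ge C_2$ along the bounded trajectory — two equivalent ways of exploiting locality.
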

\begin{proof} By \cite[Theorem 3.5]{Grun00} and \cref{prop:Vdecayapprox} %
	there exists a control Lyapunov function $V$ satisfying inequality~\cref{eq:Vdecayapprox} and, by suitably scaling, we have % by $d/p$ %
	\begin{equation}
		V(\Lambda_\alpha x) = \alpha^{d} V(x) \label{eq:Vhomd}
	\end{equation}
	and the inequality
	\begin{equation}
		\min_{v\in \mathrm{co}\,f(x,W_x)} D V(x;v) \leq %- d/p \delta \sigma N(x)^\tau V(x) =:
		- \gamma N(x)^\tau V(x)\label{eq:Vdecayapproxd}
	\end{equation}
	for $\gamma > 0$ in a neighbourhood $\mathcal{N}$ of the origin. %
	From equation~\cref{eq:Vhomd} it follows that there exists $C_1>0$ with
	\begin{equation}\label{eq:EstimateHomogeneousLyapunovFunction}
		V(x) \le C_1 \inf_{u \in \mathbb{R}^m} \ell(x,u)
	\end{equation}
	for all $x\in\R^n$. %
	Now pick an initial condition $x^0$ from the neighbourhood $\mathcal{N}$ of the origin on which \cref{eq:intV} with $q = d/p$ holds. %
	Then $V$ decreases along the solution which implies that $V(x(t,x^0,u))$ and thus $N(x(t,x^0,u))$ are bounded uniformly in $t$ and $x^0$. %
	As a consequence, there exists $C_2>0$, independent of $x^0$, such that $N(x(t,x^0,u))^\tau \ge C_2$ for all $t\ge 0$ using $\tau \leq 0$.

	Then, along the solution in \cref{eq:intV}, again \cref{eq:Vhomd} and the inclusion $u(s) \in W_{x(s,x^0,u)}$ %
	(which implies $N(u(s)) \leq D N(x(s,x^0,u))$ for an appropriate constant $D>0$) %
	yield the existence of $C_3>0$ with
	\[
		\ell(x(s,x^0,u),u(s)) \le C_3 V(x(s,x^0,u),u(s))
	\]
	for almost all $s\in (0,t)$. Thus, using \cref{eq:intV} we obtain
	\begin{eqnarray*}
		V_t(x^0) & \leq & \int_0^t \ell(x(s,x^0,u),u(s))\,\mathrm{d}s \leq \int_0^t C_3 V(x(s,x^0,u),u(s))\,\mathrm{d}s \\
		& \le & \frac{C_3}{C_2 \gamma} \Big( V(x^0)-V(x(t,x^0,u)) \Big) \le \frac{C_3}{C_2\gamma}V(x^0) \\
		& \stackrel{\cref{eq:EstimateHomogeneousLyapunovFunction}}{\leq} & \frac{C_1 C_3}{C_2\gamma}\inf_{u \in \mathbb{R}^m} \ell(x^0,u).
	\end{eqnarray*}
	This shows the first claim for $C=C_1C_3/(C_2\gamma)$. %
	Asymptotic stability of the origin for the MPC closed loop then follows from \cref{thm:AsymptoticStabilityMPC_nonglobal}.
\end{proof}

\subsection{Example: Nonholonomic Robot}

We illustrate our findings by means of the following example.
\begin{example}\label{ExampleNonholonomicMobileRobot}
	We consider the nonholonomic mobile robot given by
	\begin{equation}\label{eq:NonholonomicMobileRobot}
		\dot{x}(t) = f(x(t),u(t)) := \left( \begin{array}{c}
			\cos(x_3(t)) \\
			\sin(x_3(t)) \\
			0
		\end{array} \right) u_1(t) + \left( \begin{array}{c}
			0 \\
			0 \\
			1
		\end{array} \right) u_2(t),
	\end{equation}
	which is a control affine system, i.e.~$f(x,u) = f_0(x) + \sum_{i=1}^m u_i f_i(x)$ %
	with mappings $f_0,f_1,\ldots,f_m: \mathbb{R}^n \rightarrow \mathbb{R}^n$. %
	The system dynamics~\cref{eq:NonholonomicMobileRobot} describe a \textit{driftless} system since $f_0(x) \equiv 0$.
\end{example}

Let us recall that, as shown by Brockett in \cite{1983-Brockett-proccedings} (see also \cite{1990-Coron-SCL} and \cite[Theorem 11.1]{Coro07}), %
the control system~\cref{eq:NonholonomicMobileRobot} cannot be locally asymptotically stabilizable by means of continuous feedback laws. %
We show that the proposed MPC method allows to locally asymptotically stabilize the control system~\cref{eq:NonholonomicMobileRobot}. %
To this end, we firstly state the homogeneous approximation and show that the approximation property, i.e.\ Inequality~\cref{InequalityHomogeneousApproximation}, is satisfied. %
Note that the parameters $r_i$ and $s_j$ are not unique because the homogeneity condition~\cref{NotationHomogeneityCondition} remains satisfied if they are multiplied by an arbitrary positive scalar.
\begin{proposition}\label{PropositionNonholonomicRobot}
	Inequality~\cref{InequalityHomogeneousApproximation} holds for the homogeneous approximation of \cref{ExampleNonholonomicMobileRobot} at the origin, which is given by
	\begin{equation}\label{eq:NonholonomicMobileRobotHomogeneousApproximation}
		\dot{x}(t) = h(x(t),u(t)) := \left( \begin{array}{c}
			1 \\
			x_3(t) \\
			0
		\end{array} \right) u_1(t) + \left( \begin{array}{c}
			0 \\
			0 \\
			1
		\end{array} \right) u_2(t)
	\end{equation}
	with $\tau = 0$, $r_1 = r_3 = s_1 = s_2 = 1$, and $r_2 = 2$.
\end{proposition}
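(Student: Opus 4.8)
The plan is to identify the remainder $R := f - h$ explicitly and to bound it componentwise via elementary Taylor estimates for $\cos$ and $\sin$. Subtracting \cref{eq:NonholonomicMobileRobotHomogeneousApproximation} from \cref{eq:NonholonomicMobileRobot} yields
\begin{equation*}
	R(x,u) = f(x,u) - h(x,u) = \begin{pmatrix} (\cos x_3 - 1)\, u_1 \\ (\sin x_3 - x_3)\, u_1 \\ 0 \end{pmatrix},
\end{equation*}
so that $R_3 \equiv 0$ and only $R_1$ and $R_2$ have to be estimated. As a preliminary (one-line) step I would also verify that $h$ in \cref{eq:NonholonomicMobileRobotHomogeneousApproximation} is indeed $(r,s,\tau)$-homogeneous in the sense of \cref{def:Homogeneity} with $\tau = 0$, $r = (1,2,1)$ and $s = (1,1)$: with $\Lambda_\alpha = \diag(\alpha,\alpha^2,\alpha)$ and $\Delta_\alpha = \diag(\alpha,\alpha)$ one checks directly that $h(\Lambda_\alpha x, \Delta_\alpha u) = \Lambda_\alpha h(x,u) = \alpha^\tau \Lambda_\alpha h(x,u)$.

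For the estimate itself I would substitute $\Lambda_\alpha x$ and $\Delta_\alpha u$ into $R$ and simply track powers of $\alpha$. Using the elementary bounds $|\cos\theta - 1| \le \theta^2/2$ and $|\sin\theta - \theta| \le |\theta|^3/6$ with $\theta = \alpha x_3$, together with $|x_3| \le \|x\| \le \rho$ and $|u_1| \le \|u\| \le \rho$ on the admissible set, one obtains for all $\alpha \in (0,1]$
\begin{align*}
	|R_1(\Lambda_\alpha x, \Delta_\alpha u)| &= |\cos(\alpha x_3) - 1|\cdot \alpha |u_1| \le \tfrac{1}{2}\,\alpha^2 x_3^2 \cdot \alpha |u_1| \le \tfrac{\rho^3}{2}\,\alpha^{3},\\
	|R_2(\Lambda_\alpha x, \Delta_\alpha u)| &= |\sin(\alpha x_3) - \alpha x_3|\cdot \alpha |u_1| \le \tfrac{1}{6}\,\alpha^3 |x_3|^3 \cdot \alpha |u_1| \le \tfrac{\rho^4}{6}\,\alpha^{4}.
\end{align*}
Since $r_1 + \tau = 1$ and $r_2 + \tau = 2$, the $\alpha$-exponent exceeds $r_i+\tau$ by exactly $2$ in both cases, so \cref{InequalityHomogeneousApproximation} holds with $\eta := 2$, $M := \max\{\rho^3/2,\ \rho^4/6\}$ and an arbitrary fixed $\rho > 0$; for the third component the inequality is trivial. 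This completes the argument.

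I expect no genuine obstacle here: the statement reduces to a direct Taylor expansion. The only point worth a remark is that a single $\eta > 0$ must serve all three components of $R$, which is automatic because the two nonzero components carry the same surplus exponent; in fact, since $\alpha \le 1$, any $\eta \in (0,2]$ would do. One should also mention (as the paper already notes before the statement) that the exponents $r_i$, $s_j$ are only determined up to a common positive scalar, so this particular normalization is a choice of convenience.
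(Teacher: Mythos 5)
Your proof is correct and follows essentially the same route as the paper's: explicit identification of the remainder $R=f-h$, the Taylor bounds $|\cos\theta-1|\le\theta^2/2$ and $|\sin\theta-\theta|\le|\theta|^3/6$, and the resulting constants $M=\max\{\rho^3/2,\rho^4/6\}$ with $\eta\in(0,2]$. The added one-line check that $h$ is indeed $(r,s,0)$-homogeneous is a harmless (and reasonable) supplement, not a different argument.
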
	
\begin{proof}
	For $i = 3$, the approximation error is zero. %
	For $i = 1$, we have to show the inequality $| (\cos( \alpha^{r_3} x_3 ) - 1) \alpha^{r_1} u_1 | \leq M_1 \alpha^{r_1 + \tau + \eta_1}$. %
	Using $r_1 = r_3 = 1$, we have $| \cos( \alpha x_3 ) - 1 | \leq \alpha^2 x_3^2 / 2$. %
	Hence, $| (\cos( \alpha^{r_3} x_3 ) - 1) \alpha^{r_1} u_1 | $ is smaller than $\alpha^{3} \rho^3 / 2$, which shows the assertion for $M_1 := \rho^3/2$ and $\eta_1 \in [0,2]$. %
	For $i = 2$, we analogously derive the estimate
	\[	
		| \sin(\alpha^{r_3}x_3) - \alpha^{r_3}x_3 | \alpha^{r_1} |u_1| \leq \alpha^{4}|x_3|^3 |u_1|/6 \leq \alpha^{4} \rho^4 / 6
	\]
	and, thus, get the estimate for $M_2 := \rho^4/6$ and $\eta_2 \in [0,2]$. Overall, the desired property holds with $M := \max\{ \rho^3/2, \rho^4/6\}$ and $\eta \in (0,2]$.
\end{proof}	

\noindent For the nonholonomic mobile robot, i.e.\ \cref{ExampleNonholonomicMobileRobot}, MPC does not work  if (purely) quadratic stage cost are used, i.e.\ $\ell(x,u) := x^\top Q x + u^\top R u$ with matrices $Q \in \mathbb{R}^{n \times n}$ and $R \in \mathbb{R}^{m \times m}$ as rigorously shown in~\cite{MullWort17}. Using the proposed stage cost~\cref{eq:StageCosts} reads for the nonholonomic robot (and its homogeneous approximation at the origin) as
\begin{equation}\label{NotationStageCostNonholonomicRobot}
	\ell(x,u) = x_1^4 + x_2^2 + x_3^4 + u_1^4 + u_2^4,
\end{equation}
i.e.\ precisely the stage costs used in~\cite{WMZGMD_NMPC_2015}, for which asymptotic stability of the origin w.r.t.\ the MPC closed loop holds if a sufficiently large prediction horizon~$T$ is employed, cf.~\cite{worthmann2016model}. %
Using the results derived in this paper, %
local asymptotic stability of the origin is a direct corollary of the simple calculations presented in \cref{PropositionNonholonomicRobot}. %
Moreover, we conjecture that the observations w.r.t.\ \textit{essentially using the homogeneous approximation} may significantly facilitate (and improve) the quantitative estimates on the length of the prediction horizon.
\begin{remark}
	Note that the particular definition of $d$ has not been used so far. This explains, e.g., why the stage cost~\cref{NotationStageCostNonholonomicRobot} for the nonholonomic robot may be replaced by
	\begin{equation}\nonumber
		\ell(x,u) = \sum_{i=1}^n q_{x_i} |x_i|^{d/r_i} + \sum_{j=1}^m q_{u_j} |u_j|^{d/s_j}
	\end{equation}
	with arbitrary $d \in (0,\infty)$ where $q_{x_i}$, $i \in \{1,2,\ldots,n\}$, and $q_{u_j}$, $j \in \{1,2,\ldots,m\}$ are positive weighting factors, see \cite[Subsection~2.2]{WortRebl14} for an example.\footnote{In this reference, Brockett's nonholonomic integrator example was considered. However, this example is (locally) equivalent to the nonholonomic robot (unicycle), see \cite[pp.\,83-89]{liberzon2003switching}.}
\end{remark}
\begin{remark}
	Other homogeneous approximations can also been used to locally stabilize in small-time the control system~\cref{eq:NonholonomicMobileRobot} by means of periodic time-varying feedback laws as shown in \cite{AndrCoro19}.
\end{remark}

\section{Conclusions}\label{sec:conclusions}

For nonlinear systems with stabilizable linearization at the origin, %
MPC \textit{works} (with quadratic stage cost) for sufficiently large prediction horizon (without adding stabilizing terminal costs and/or constraints). %
We have generalized this methodology to nonlinear systems with globally asymptotically null controllable homogeneous approximation at the desired set point. %
To this end, we have set up a general framework for rigorously showing cost controllability %
and, thus, local asymptotic stability of the origin w.r.t.\ the MPC closed loop for systems by designing the stage cost based on the homogeneous approximation.

A major advantage of the presented approach is its simplicity from the user's perspective: %
calculating the homogeneous approximation, checking (local) asymptotic stabilizability using the already existing results from the literature, %
and finally running MPC with the stage costs based on the coefficients from the homogeneity  definition.

\bibliographystyle{plain}
\bibliography{Homogeneity_url}

\def\cprime{$'$} \newcommand{\SortNoop}[1]{}
\begin{thebibliography}{10}

\bibitem{AndrCoro19}
{\SortNoop{Andr\'{e}a-Novel, B.}}{B. d'Andr\'{e}a-Novel}, J.-M. Coron, and
  W.~Perruquetti.
\newblock Small-time stabilization of nonholonomic or underactuated mechanical
  systems: the unicycle and the slider examples.
\newblock {\em Preprint}, 2019.
\newblock \url{https://hal.archives-ouvertes.fr/hal-02140549}.

\bibitem{AndrPral08}
V.~Andrieu, L.~Praly, and A.~Astolfi.
\newblock Homogeneous approximation, recursive observer design, and output
  feedback.
\newblock {\em SIAM Journal on Control and Optimization}, 47(4):1814--1850,
  2008.

\bibitem{AubiCell84}
J.-P. Aubin and A.~Cellina.
\newblock {\em Differential Inclusions}.
\newblock Springer, 1984.

\bibitem{BaccRosi06}
A.~Bacciotti and L.~Rosier.
\newblock {\em Liapunov functions and stability in control theory}.
\newblock Communications and Control Engineering Series. Springer-Verlag,
  Berlin, second edition, 2005.

\bibitem{BhatBern05}
S.P. Bhat and D.S. Bernstein.
\newblock Geometric homogeneity with applications to finite-time stability.
\newblock {\em Mathematics of Control, Signals and Systems}, 17(2):101--127,
  2005.

\bibitem{BoccGrun14}
A.~Boccia, L.~Gr{\"u}ne, and K.~Worthmann.
\newblock Stability and feasibility of state constrained mpc without
  stabilizing terminal constraints.
\newblock {\em {Systems \& Control Letters}}, 72:14--21, 2014.

\bibitem{1983-Brockett-proccedings}
R.W. Brockett.
\newblock Asymptotic stability and feedback stabilization.
\newblock In {\em Differential geometric control theory ({H}oughton, {M}ich.,
  1982)}, volume~27 of {\em Progr. Math.}, pages 181--191. Birkh\"{a}user
  Boston, Boston, MA, 1983.

\bibitem{ChenAllg98}
H.~Chen and F.~Allg{\"o}wer.
\newblock A quasi-infinite horizon nonlinear model predictive control scheme
  with guaranteed stability.
\newblock {\em Automatica}, 34(10):1205--1217, 1998.

\bibitem{1990-Coron-SCL}
J.-M. Coron.
\newblock A necessary condition for feedback stabilization.
\newblock {\em Systems Control Lett.}, 14(3):227--232, 1990.

\bibitem{Coro07}
J.-M. Coron.
\newblock {\em Control and nonlinearity}, volume 136 of {\em Mathematical
  Surveys and Monographs}.
\newblock American Mathematical Society, Providence, RI, 2007.

\bibitem{CoroPral91}
J.-M. Coron and L.~Praly.
\newblock Adding an integrator for the stabilization problem.
\newblock {\em Systems \& Control Letters}, 17(2):89--104, 1991.

\bibitem{ForbPatw15}
M.G. Forbes, R.S. Patwardhan, H.~Hamadah, and R.B. Gopaluni.
\newblock Model predictive control in industry: Challenges and opportunities.
\newblock {\em IFAC-PapersOnLine}, 48(8):531--538, 2015.

\bibitem{Grun00}
L.~Gr{\"u}ne.
\newblock Homogeneous state feedback stabilization of homogenous systems.
\newblock {\em SIAM Journal on Control and Optimization}, 38(4):1288--1308,
  2000.

\bibitem{Grun09}
L.~Gr{\"u}ne.
\newblock Analysis and design of unconstrained nonlinear {M}{P}{C} schemes for
  finite and infinite dimensional systems.
\newblock {\em SIAM Journal on Control and Optimization}, 48:1206--1228, 2009.

\bibitem{GrunPann17}
L.~Gr{\"u}ne and J.~Pannek.
\newblock Nonlinear model predictive control.
\newblock In {\em Nonlinear Model Predictive Control}. Springer, 2017.

\bibitem{GrunPann10}
L.~Gr{\"u}ne, J.~Pannek, M.~Seehafer, and K.~Worthmann.
\newblock Analysis of unconstrained nonlinear mpc schemes with time varying
  control horizon.
\newblock {\em SIAM Journal on Control and Optimization}, 48(8):4938--4962,
  2010.

\bibitem{Herm95}
H.~Hermes.
\newblock Homogeneous feedback controls for homogeneous systems.
\newblock {\em Systems \& Control Letters}, 24(1):7--11, 1995.

\bibitem{Kaws88}
M.~Kawski.
\newblock Stability and nilpotent approximations.
\newblock In {\em Proc. 27th IEEE Conf. on Decision and Control}, pages
  1244--1248, 1988.

\bibitem{Kawsk90}
M.~Kawski.
\newblock Homogeneous stabilizing feedback laws.
\newblock {\em Control Theory and advanced technology}, 6(4):497--516, 1990.

\bibitem{Kell14}
C.M. Kellett.
\newblock A compendium of comparison function results.
\newblock {\em Mathematics of Control, Signals, and Systems}, 26(3):339--374,
  2014.

\bibitem{KouvCann16}
B.~Kouvaritakis and M.~Cannon.
\newblock {\em Model Predictive Control}.
\newblock Springer, Basel, 2016.

\bibitem{Lee11}
J.H. Lee.
\newblock Model predictive control: Review of the three decades of development.
\newblock {\em International Journal of Control, Automation and Systems},
  9(3):415, 2011.

\bibitem{liberzon2003switching}
D.~Liberzon.
\newblock Switching in systems and control, ser. systems \& control:
  Foundations \& applications.
\newblock {\em Birkh\"{a}user}, 2003.

\bibitem{Mori99}
P.~Morin, J.-B. Pomet, and C.~Samson.
\newblock Design of homogeneous time-varying stabilizing control laws for
  driftless controllable systems via oscillatory approximation of lie brackets
  in closed loop.
\newblock {\em SIAM Journal on Control and Optimization}, 38(1):22--49, 1999.

\bibitem{MoriSams96}
P.~Morin and C.~Samson.
\newblock Time-varying exponential stabilization of chained form systems based
  on a backstepping technique.
\newblock In {\em Proc. 35th IEEE Conf. Decision and Control}, volume~2, pages
  1449--1454, 1996.

\bibitem{MullWort17}
M.A. M{\"u}ller and K.~Worthmann.
\newblock Quadratic costs do not always work in {MPC}.
\newblock {\em Automatica}, 82:269--277, 2017.

\bibitem{RawlMayn17}
J.B. Rawlings, D.Q. Mayne, and M.M. Diehl.
\newblock {\em Model Predictive Control: Theory, Computation, and Design}.
\newblock Nob Hill Publishing, 2017.

\bibitem{ReblAllg12}
M.~Reble and F.~Allg{\"o}wer.
\newblock Unconstrained model predictive control and suboptimality estimates
  for nonlinear continuous-time systems.
\newblock {\em Automatica}, 48(8):1812--1817, 2012.

\bibitem{Rosi92}
L.~Rosier.
\newblock Homogeneous lyapunov function for homogeneous continuous vector
  field.
\newblock {\em Systems \& Control Letters}, 19(6):467--473, 1992.

\bibitem{SonS95}
E.D. Sontag and H.J. Sussmann.
\newblock Nonsmooth control-{L}yapunov functions.
\newblock In {\em Proc. 34th IEEE Conf. Decision and Control}, New Orleans, LA,
  USA, 1995.

\bibitem{TunaMess06}
S.E. Tuna, M.J. Messina, and A.R. Teel.
\newblock Shorter horizons for model predictive control.
\newblock In {\em Proc. Amer. Control Conf.}, pages 863--868, Minneapolis, MN,
  USA, 2006.

\bibitem{Wort11}
K.~Worthmann.
\newblock {\em {S}tability {A}nalysis of {U}nconstrained {R}eceding {H}orizon
  {C}ontrol {S}chemes}.
\newblock PhD thesis, University of Bayreuth, 2011.

\bibitem{WMZGMD_NMPC_2015}
K.~Worthmann, M.W. Mehrez, M.~Zanon, R.G. Gosine, G.K.I. Mann, and M.~Diehl.
\newblock Regulation of differential drive robots using continuous time mpc
  without stabilizing constraints or costs.
\newblock {\em IFAC-PapersOnLine}, 48(23):129--135, 2015.

\bibitem{worthmann2016model}
K.~Worthmann, M.W. Mehrez, M.~Zanon, G.K.I. Mann, R.G. Gosine, and M.~Diehl.
\newblock Model predictive control of nonholonomic mobile robots without
  stabilizing constraints and costs.
\newblock {\em IEEE Transactions on Control Systems Technology},
  24(4):1394--1406, 2016.

\bibitem{WortRebl14}
K.~Worthmann, M.~Reble, L.~Gr\"{u}ne, and F.~Allg\"{o}wer.
\newblock The role of sampling for stability and performance in unconstrained
  nonlinear model predictive control.
\newblock {\em SIAM Journal on Control and Optimization}, 52(1):581--605, 2014.

\end{thebibliography}
\end{document}